\documentclass[twoside,11pt]{amsart}
\usepackage{latexsym}
\usepackage{amssymb}
\usepackage{epsfig}
\usepackage{multirow}

\textwidth=16cm \oddsidemargin=.5cm \evensidemargin=.5cm
\textheight=22.5cm \topmargin=-0.2cm

\newtheorem{thm}{Theorem}[section]

\newtheorem{lem}[thm]{Lemma}

\theoremstyle{definition}

\theoremstyle{remark}

\newcommand{\R}{\mathbb{R}}

\makeatletter
\def\serieslogo@{}
\makeatother \makeatletter
\def\@setcopyright{}
\makeatother

\def\Real{{\mathbb R}}

\newcommand{\eq}[1]{{(\ref{#1})}}

\begin{document}
\title[]{ Gaussian beam methods for the Helmholtz equation}
\author{Hailiang Liu$^1$ \and James Ralston$^2$ \and Olof Runborg$^3$ \and Nicolay M. Tanushev$^4$}
\thanks{$^1$ Department of Mathematics, Iowa State University, Ames, IA 50010, USA}
\thanks{$^2$ Department of Mathematics, University of California at Los Angeles, Los Angeles, CA 90095, USA}
\thanks{$^3$ Department of Mathematics and Swedish e-Science Research Center (SeRC), KTH, 10044 Stockholm, Sweden}
\thanks{$^4$ Z-Terra Inc., 
17171 Park Row, Suite 247,
Houston TX 77084, USA}
\date{\today}

\subjclass[2000]{35B45, 35J05, 35Q60, 78A40}
\keywords{Helmholtz equation, high frequency wave
propagation, localized source, radiation
condition}

\begin{abstract}
In this work we construct Gaussian beam
approximations to solutions of the high frequency
Helmholtz equation with a localized source.
{
Under the assumption of non-trapping rays we show 
error estimates between the exact outgoing solution
and Gaussian beams in terms of the wave number $k$, 
both for single beams and superposition
of beams. The main result is that the relative local $L^2$ error
in the beam approximations decay as {$k^{-N/2}$
independent of dimension and presence of caustics,
for $N$-th order beams.}
}
\end{abstract}

\maketitle

\section{Introduction}

In this article we are interested in the accuracy
of Gaussian beam approximations to solutions of
the high frequency Helmholtz equation with a
source term,
\begin{align} \label{ho}
L_n u =_{def}
\Delta u+ (i\alpha k + k^2)n^2 u= f, \qquad x \in \mathbb{R}^d.
\end{align}
Here $k>0$ is the wave number, assumed to be
large, $n(x)$ is the index of refraction and
$f(x;k)$ is a source function which in general
also depends on $k$. We assume that both
$f(x;k)$ and $n(x)-1$ vanish for $|x|>R$.
 The
nonnegative parameter $\alpha$ represents
absorption. It is zero in the limit of zero
absorption, where $L^2$ solutions of \eqref{ho}
become solutions satisfying the standard
radiation condition.

The Helmholtz equation (\ref{ho}) is widely used
to model wave propagation problems in application
areas like electromagnetics, geophysics and
acoustics. Numerical simulation
of Helmholtz becomes expensive when the frequency
of the waves is high. In direct discretization
methods a large number of grid points is then
needed to resolve the wave oscillations, and the
computational cost to maintain constant accuracy
grows algebraically with the frequency. The
Helmholtz equation is typically even more
difficult to handle in this regime than
time-dependent wave equations, as numerical
discretizations lead to large indefinite and
ill-conditioned linear systems of equations, for
which it is difficult to find efficient
preconditioners \cite{Erlaga1}. At sufficiently
high frequencies direct simulations are not
feasible.

%\subsection{HF methods, go}

As an alternative
 one can use high frequency asymptotic models for wave propagation, such as geometrical optics \cite{Keller:62, EngRun:03, Runborg:07}, which is obtained when the frequency tends to infinity.
 The solution of the partial differential equation (PDE)
is assumed to be of the form
\begin{align}\label{GOform}
u  =
a e^{ik\phi},
\end{align}
where $\phi$ is the phase, and $a$ is the
amplitude of the solution. In the limit $k\to
\infty$ the phase and amplitude are independent
of the frequency and vary on a much coarser scale
than the full wave solution.
 They can therefore be computed at a computational cost independent of the frequency. However, a main drawback of geometrical optics is that the model breaks down at caustics, where rays concentrate and the predicted amplitude
 $a$ becomes unbounded.

Gaussian beams form another high frequency
asymptotic model which is closely related to
geometrical optics. However, unlike geometrical
optics, the phase $\phi$ is
complex-valued, and there is
no breakdown at caustics.
The solution is still assumed to be of
the form \eq{GOform}, but it is
concentrated near a single ray of geometrical
optics. To form such a solution, we first pick a
ray and solve systems of ordinary differential
equations  along it to find the Taylor expansions of the phase and
amplitude in variables transverse to the ray. Although the phase function is
real-valued along the central ray, its imaginary part  is chosen so that the solution
decays exponentially away from the central ray,
maintaining a Gaussian-shaped profile. For the simplest {first order}
beams the phase
$\phi$ is a second order Taylor expansion, while
the amplitude $a$ is a zeroth order expansion. For wave equations  one can use time
as a parameter for the rays, and
the expressions for the phase and amplitude are
\begin{align}\label{phaseamp}
\phi(t,y)=\phi_{0}(t) + (y-x(t))\cdot p(t) + \frac12(y-x(t))\cdot  M(t)(y-x(t)),
\qquad
a(t,y) = a_0(t)
\end{align}
where $x(t)$ is the geometrical optics ray,
$p(t)$ is the direction of the ray and the second
derivative matrix $M(t)$ encodes the width and
curvature of the beam; $M$ has a positive
definite imaginary part which ensures the beam
has a Gaussian shape. In the
Helmholtz case, since there is no longer a
distinguished variable with level sets transverse
to the rays, one uses Taylor expansion in the
plane orthogonal to the ray direction.
{Higher order beams are constructed through higher
order Taylor expansions in \eq{phaseamp}.}

The existence of Gaussian beam solutions to the
wave equation has been known since sometime in
the 1960's, first in connection with lasers, see
Babi\v{c} and Buldyrev \cite{BabicBuldyrev:1991}.
Later, they were used in the analysis of
propagation of singularities in PDEs by
H\"ormander \cite{Hormander:71} and Ralston
\cite{Ra82}. In the context of the Schr\"odinger
equation first order beams correspond to
classical coherent states. Higher order versions
of these have been introduced to approximate the
Schr\"odinger equation in quantum chemistry by
e.g.\mbox{} Heller \cite{Heller:81}, Hagedorn
\cite{Hagedorn:80}, Herman and Kluk
\cite{HermanKluk:84}.

More general high frequency solutions that are
not necessarily concentrated on a single ray can
be described by superpositions of Gaussian beams.
This idea was first introduced by Babi\v{c} and
Pankratova in \cite{Babic:1973} and was later
proposed as a method for approximating wave
propagation by Popov in \cite{Popov:1982}.
Letting the beam parameters depend on their
initial location $z$, such that $x=x(t;z)$,
$p=p(t;z)$ etc., and $a=a(t,y;z)$,
$\phi=\phi(t,y;z)$, the approximate
 solution for an initial value problem can be expressed with the
superposition integral
\begin{align}
   u(t,y) =\left(\frac{k}{2\pi}\right)^\frac{d}{2}
   \int_{K_0} a(t,y;z)  e^{ik\phi(t,y;z)} dz \ ,
\end{align}
where $K_0$ is a compact subset of $\Real^d$.

It should be mentioned that there are other
related Gaussian beam like approximations. In the
{\em thawed} Gaussian approximation
\cite{Heller:75} the phase $\phi$ is always a
second order polynomial. Higher order is obtained
by instead taking a higher order polynomial in
the amplitude, to correct also for errors in the
phase. {\em Frozen} Gaussian approximations
\cite{Heller:81,HermanKluk:84} also use a second
order polynomial for the phase $\phi$, but with a
fixed size of the second derivative
($M(t)$=constant). Single frozen Gaussians are
therefore {\em not} asymptotic solutions to the
wave equation. However, superpositions of frozen
Gaussians are and they can be thought of as an
efficient linear basis for the wave equation.

Numerical methods based on Gaussian beam
superpositions go back to the 1980's with work by
Popov, \cite{Popov:1982,Katchalov_Popov:1981},
Cerveny \cite{Cerveny_etal:1982} and Klime\v{s}
\cite{Klimes:1984} for high frequency waves and
e.g.\mbox{}  Heller, Herman, Kluk
\cite{Heller:81,HermanKluk:84} in quantum
chemistry. In the past decade there was a renewed
interest in such methods for waves following
their successful use in seismic imaging and oil
exploration by Hill \cite{Hill:1990,Hill:2001}.
Development of new beam based methods are now the
subject of intense interest in the numerical
analysis community and the methods are being
applied in a host of applications, from the
original geophysical applications to gravity
waves \cite{TQR:2007}, the semiclassical
Schr\"{o}dinger equation
\cite{FaouLubich:06,JinWuYang:08,LeungQian:09},
and acoustic waves \cite{Tanushev:08}. See also
the survey of Gaussian beam methods in
\cite{JinMarSpa:12}. Individual beams are
normally computed in a Lagrangian fashion by
solving ODEs along the central rays. The
superposition is then replaced by a discrete
summation of beams. There are also more recent
numerical techniques based on Eulerian
formulations of the problem \cite{LeuQiaBur:07,JinWuYang:08,JinEtal:10,LeungQian:09,QianYing:2010}.
In these methods a PDE is derived for the parameters in the beams, i.e.\mbox{} the quantities in the ODEs. This is coupled with a
level-set PDE for the ray dynamics. With the Eulerian formulation the result is no longer a superposition of asymptotic solutions to the wave equation! For superpositions over subdomains moving with the Hamiltonian flow,  it was shown directly  in \cite{LiuRalston:09, LiuRalston:10} that they are asymptotic solutions without reference to standard Gaussian beams. Numerical approaches for treating general high frequency initial data for superposition over physical space were considered in \cite{TET:2009,AETT:2010} for the wave equation.

In this paper we study the accuracy in terms of
$k$ of Gaussian beams and superpositions of
Gaussian beams for the Helmholtz equation
\eq{ho}. This would give a rigorous foundation
for beam based numerical methods used to solve
the Helmholtz equation in the high frequency
regime. In the time-dependent case several such
error estimates have been derived in recent
years: for the initial data \cite{Tanushev:08},
for scalar hyperbolic equations and the
Schr\"odinger equation
\cite{LiuRalston:09,LiuRalston:10,LRT11}, for
frozen Gaussians \cite{RousseSwart:09,LuYang:12}
and for the acoustic wave equation with
superpositions in phase space
\cite{BougachaEtal:09}. The general result is
that the error between the exact solution and the
Gaussian beam approximation decays as $k^{-N/2}$
for $N$-th order beams in the appropriate Sobolev
norm. There are, however, no rigorous error
estimates of this type available for the
Helmholtz equation. What is known is how well the
beams asymptotically satisfy the equation,
i.e.\mbox{} the size of $L_n u$ for a single
beam. Let us also mention an estimate of the
Taylor expansion error away from caustics,
\cite{MotamedRunborg:09}.

The analysis of Gaussian beam superpositions for
Helmholtz presents a few new challenges compared
to the time-dependent case. First, it must be
clarified precisely how beams are generated by
the source function and how the Gaussian beam
approximation is extended to infinity. This is
done in \S 2 and \S 3 for a compactly supported
source function that concentrates on a
co-dimension one manifold. Second, additional
assumptions on the index of refraction $n(x)$ are
needed to get a well-posed problem with
$k$-independent solution estimates and a
well-behaved Gaussian beam approximation at
infinity. The conditions we use are that $n(x)$
is non-trapping and that there is an $R$ for
which $n(x)$ is constant when $|x|>R$.

In \S 4 we  consider the difference between the
Gaussian beam approximation and the exact
solution to the radiation problem with the
corresponding source function. Here we are
interested in behavior of the local $L^2$ norm
$||u_{GB}-u||_{L^2(|x|<R)}$ as $k\to \infty$.
This depends on the well-posedness of the
radiation problem. There are a variety of
estimates that apply here
\cite{PerthameVega:99,CastellaJecko:06}, but the
Laplace-transform based estimates of Vainberg
\cite{V75,V89} suffice for our purposes. In \S 5
we compare the  Gaussian beam approximation with
the result of stationary phase expansion of the
exact solution in a simple example.

Sections \S 6 and \S 7 are devoted to
superpositions of beams with fundamental source
terms. Our main result is Theorem~\ref{ee} where
we are able to show that the error between
superposition of {$N$-th order beams and the exact
outgoing solution decays as $k^{-N/2}$}
independent of dimension and presence of
caustics. This is consistent with the optimal
results of \cite{LRT11} in the time-dependent
setting. Finally, \S 7 gives an example of how
beams can be constructed for more general source
functions.

%%%%%%%%%%%%%%%%%%%%%%%%%%%%%%%%%%%%%%%%%%%%%%%%%%%%%%%%%%%%%%%%%%%%%%%%%%%%
\section{Construction of Gaussian beams}
%%%%%%%%%%%%%%%%%%%%%%%%%%%%%%%%%%%%%%%%%%%%%%%%%%%%%%%%%%%%%%%%%%%%%%%%%%%%
In this section we construct the Gaussian
beam solutions for \eq{ho} when $f$ is compactly
supported on a co-dimension one manifold. 
This construction has become standard (see, for
example, \cite{Ra82} or \cite{KKL01}) and we
review some details here which
will be used later.
The
form of the beam solutions is
\begin{equation}\label{wkb}
u(x;k)=e^{ik\phi(x)}(a_0(x)+a_1(x)k^{-1}+\cdots+a_\ell(x)k^{-\ell}).
\end{equation}
Each beam concentrates on a geometrical optics
ray $\gamma=\{x(s):s\in \Bbb R\}$, which is the
spatial part of the bicharacteristics
$(x(s),p(s))$ defined by the flow for the
Hamiltonian $H(x,p)=|p|^2-n^2(x)$
\begin{equation} \label{bich}
\dot x= 2p, \quad \dot p =-\nabla_x n^2(x).
\end{equation}
We assume that there is a number $R>0$ such that
the (smooth) index of refraction satisfies
$n(x)\equiv 1$ when $|x|>R$ and that the source
function $f$ is compactly supported in $\{|x|<
R\}$. 
Here we also restrict the construction of the Gaussian
beam solution to the larger region
$|x|\leq 6R$.
The essential additional hypothesis for our
construction is that the index of refraction does
not lead to trapped rays. The precise
non-trapping condition is that there is an $L$
such that $|x(L)|>2R$ for all solutions with
$|x(0)|<R$ and $H(x(0),p(0))=0$.
Note that this implies that $|x(s)|>2R$ for $s>L$
since rays are straight lines when $n(x)\equiv 1$.

Applying $L_n$ in \eq{ho} to \eq{wkb}  we have
\begin{equation}\label{Lnu}
L_n u=
e^{ik\phi} \sum_{j=-2}^\ell c_j(x) k^{-j},
\end{equation}
where 
\begin{align*}
c_{-2} & = (n^2- |\nabla_x \phi|^2)a_0=_{def}E(x)a_0,\\
c_{-1} &=i \alpha n^2a_0 +\nabla_x \cdot( a_0\nabla_x\phi) +\nabla_x a_0 \cdot \nabla_x \phi+Ea_1,\\
c_j &=i \alpha n^2a_{j+1}  +\nabla_x\cdot( a_{j+1}\nabla_x \phi) +\nabla_x a_{j+1} \cdot \nabla_x \phi+Ea_{j+1}
 +\Delta_x a_j, \quad\ j=0, 1, \ldots ,\ell .
\end{align*}
ODEs for $S(s)=\phi(x(s))$, $M(s)=D^2\phi(x(s))$ and $A_0(s)=a_0(x(s))$ 
arise from requiring that
$c_{-2}$ vanishes to third order on the ray
$x(s)$,  and that $c_{-1}$ vanishes to first
order on the ray. 
It leads to the equations
\begin{align}\label{prop}
 \dot S=2n^2(x(s))\qquad
\dot M =D^2(n^2)(x(s))-2M^2\qquad
\dot A_0 = -{\rm tr}(M(s))A_0-\alpha n^2(x(s))A_0.
\end{align}
This amounts to
constructing a ``first order" beam.
Higher order beams can be constructed by
requiring $c_{-2}$ vanishes to higher order on
$\gamma$. Then one can require that the $c_j$'s
with {$j>-2$} also
 vanish to higher order, and obtain  a recursive set of linear equations for the partial derivatives of $a_0, a_1, \ldots,
a_\ell$. More precisely, for an $N$-th order beam $\ell = \lceil N/2\rceil-1$
in \eq{wkb}
and $c_j(x)$ should
vanish to order $N-2j-2$ when $-2\leq j\leq \ell-1$.

For initial data, we
let $S(0)=0$ and choose $M(0)$ so that
\begin{equation}\label{Mprop}
M(0)=M(0)^\top, \qquad M(0)\dot x(0)=\dot p(0), \qquad
\text{${\rm Im}\{M(0)\}$ is positive definite on $\dot x(0)^\perp$}.
\end{equation}
Then for all $s$ the matrix
$M(s)$ inherits the properties of $M(0)$:
 $ M(s)\dot x(s) = \dot
p(s)$, $M(s)=M(s)^\top$,
 and   ${\rm Im}\{M(s)\}$  is
positive definite on the orthogonal complement of
$\dot x(s)$, see \cite{Ra82}. For the amplitude
we take $A_0(0)=1$. We can solve the ODE for
$A_0$ explicitly, and obtain
$$
A_0(s)= \exp\left( -\int_0^s (\alpha n^2(x(\tau))+{\rm tr}(M))d\tau\right).
$$

The phase $\phi$ in \eq{wkb} can be any function satisfying
$\phi(x(s))=S(s)$, $\nabla\phi(x(s))=p(s)$ and
$D^2\phi(x(s))=M(s)$. However, to write down
such a function we need to have $s$ as a function
of $x$. Since we have $\dot x(s)\neq 0$, $x(s)$
traces a smooth curve $\gamma$ in $\R^d$, and the
non-trapping hypothesis implies that this curve
is a straight line when $|s|>L$. 
{We let
$$
 \Omega(\eta) = \{x\,:\, |x|\leq 6R\ \text{\rm and}\  |x-\gamma|\leq \eta\},
$$ 
be the tubular neighborhood of $\gamma$ with radius $\eta$
in the ball $\{|x|\leq 6R\}$.
By choosing $\eta$ small enough,  
we can uniquely define $s=s(x)$ for all $x\in\Omega(\eta)$
such that $x(s)$ is the closest point on $\gamma$ to $x$,
provided $\gamma$ has
no self-intersections.}
We then define the phase function $\phi$
and amplitude $A$
on $\Omega$ {for first order beams} by
\begin{equation}\label{ph}
\phi(x)=S(s) +p(s)\cdot (x-x(s))+\frac{1}{2}(x-x(s))\cdot M(s)(x-x(s)),\qquad
A(x)=A_0(s),
\end{equation}
with $s=s(x)$. 
Note that $s(x)$ is constant on planes orthogonal
to $\gamma$ intersected with {$\Omega(\eta)$}. Since
$\gamma$ can have only finitely many
self-intersections, we can cut $\gamma$ into
segments without self-intersections, and define
$s(x)$ on a tubular neighborhood each segment,
ignoring the endpoints. For this reason
self-intersections will not create difficulties,
and without loss of generality we will assume
that $\gamma$ has no self-intersections in what
follows. {The construction of the Gaussian beam
phase and amplitude for higher order beams
is carried out in a similar way \cite{Ra82}.}

\begin{figure}
\includegraphics[width=.6\textwidth]{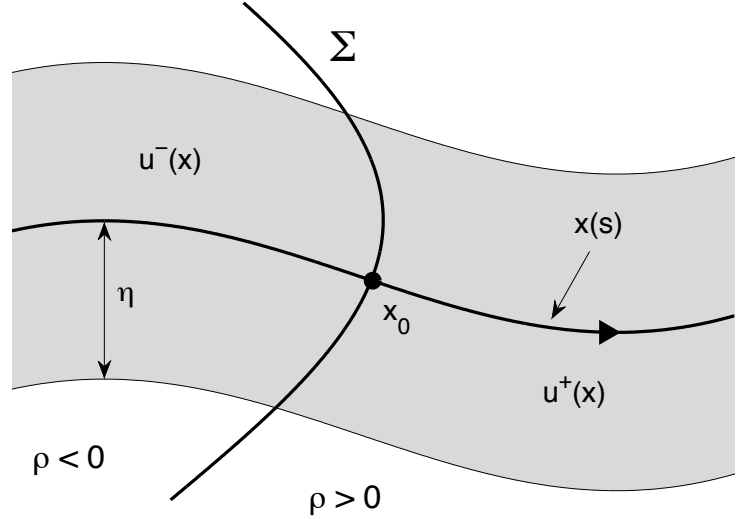}
\caption{Notation for the source in two dimensions.
The gray area indicates {$\Omega(\eta)$}.}
\label{fig:source}
\end{figure}

\subsection{Source}

To introduce the source functions that we will consider in this article let $\rho$ be a function such that $|\nabla \rho|=1$ on $\{x:\rho(x)=0\}$, and define $\Sigma$ to be the hypersurface $\{x:\rho(x)=0\}$. Given $x_0\in \Sigma$, we let $(x(s),p(s))$ be the solution of (6) with $(x(0),p(0))=(x_0,n(x_0)\nabla \rho(x_0))$. Since we assume no trapped rays and $n(x)\equiv 1$ when $|x|>R$, $x(s)$ and $p(s)$ are defined for $s\in \Bbb R$, and we set $\gamma =\{x(s), s\in \Bbb R\}$.  Then we can assume that $s(x)$ is defined on the tubular neighborhood {$\Omega(\eta)$} of $\gamma$ as above (assuming no self-intersections). We begin with a beam $u(x,k)$ concentrated on $\gamma$, and defined on {$\Omega(\eta)$}. If $u$ is first order, we can
define it by \eq{ph}. Then we define $u^+$ to be the restriction of $u$ to $\{x:\rho(x)\geq 0\}$. In order to have a source term which is a  multiple of $\delta(\rho)$, we need a second beam $u^-(x,k)$ defined on $\{x:\rho(x)\leq 0\}$ which is equal to $u^+$ on  $\Sigma$ for all $k$. Hence, writing $u^+(x,k)=A^+(x,k)e^{ik\phi^+(x)}$ and $u^-(x,k)=A^-(x,k)e^{ik\phi^-(x)}$, we must have $\phi^+=\phi^-$ and $A^+=A^-$ on $\Sigma$. Those requirements and $c_j=0,\ j=-2,\ldots,\ell$ at $x_0$ determine the Taylor series in the transverse variables at $x_0$ for $\phi^-$ and $A^-$. To see this suppose that $u^-$ is going to be a beam of order $N$ and that the coordinates on 
{$\Omega(\eta)$} are
 given by $(s,y)$ where $s=s(x)$ and $y=(y_1,\ldots,y_{d-1})$ is transversal. Then,
provided {$\eta$} is chosen small enough, $\Sigma$ is given by $s=\sigma(y)$ with $\sigma(0)=0$ and $\nabla \sigma(0)=0$. To determine the Taylor series in $y$ for $\phi^-(s,y)$ at $s=0$  one differentiates the equation $\phi^-(\sigma(y),y)=\phi^+(\sigma(y),y)$ with respect to $y$ and evaluates at
  $y=0$. When partial derivatives of $\phi^-$ with respect to $s$ appear in this calculation, they are determined by the requirement that $c_{-2}$ vanishes on $x(s)$ to order $N+2$. The Taylor series for $A^-$ in the transverse variables at $x_0$
is determined in the same way from $A^-(\sigma(y),y,k)=A^+(\sigma(y),y,k)$ for all $k$. To construct $u^-$, we use those Taylor series as  data at $s=0$ in solving the equations $c_j=0,\ j=-2,\ldots,\ell$ along $x(s)$.
 Since for an $N$-th order beam we only require that $c_j$ vanishes on $x(s)$ to order $N-2j-2$, we can still require that $\phi^+=\phi^-$ and $A^+=A^-$ {\it exactly} at points on $\Sigma$. Extending  $u^+$ to be zero in $\{x: \rho(x)<0\}$ and $u^-$ to be zero in $\{x: \rho(x)>0\}$, we define $u_{GB}=u^++u^-$. Then we have, setting $A=A^+=A^-$ on $\Sigma$,
 \begin{align}\label{source}
L_n u_{GB}&=\left[ik\left({\partial \phi^+\over \partial
\nu}-{\partial \phi^-\over \partial
\nu}\right)A+{\partial A^+\over \partial
\nu}-{\partial A^-\over \partial
\nu}
\right]e^{ik\phi^+}\delta(\rho)+f_{GB}
=_{def}g_0\delta(\rho)+f_{GB},
\end{align}
where $\nu(x)=\nabla \rho(x)$, the unit normal to $\Sigma$. We consider the singular part of $L_nu_{GB}$ in (11), i.e.  $g_0\delta(\rho)$, to be the source term and $f_{GB}$ to be the error from the Gaussian beam construction. Note that
\begin{equation}\label{fGBform}
f_{GB}=e^{ik\phi^+(x)}\sum_{j=-2}^{\ell}c^+_j(x)k^{-j}+e^{ik\phi^-(x)}\sum_{j=-2}^{\ell}c^-_{j}(x)k^{-j},
\end{equation}
where the $c_j^+(x)$ are extended to be zero when $\rho(x)<0$ and the $c_j^-(x)$ are extended to be zero when $\rho(x)>0$. For first order beams $\ell=0$ and \eq{prop} implies $c^\pm_{-2}(x)$ and $c^\pm_{-1}(x)$ are $O(|x-x(s(x))|^3)$ and $O(|x-x(s(x))|)$ respectively. Finally we restrict the support of $u_{GB}$ to {$\Omega(\eta)$} by multiplying it by a smooth cutoff  function supported in {$\Omega(\eta)$} which is identically one on {the smaller neighborhood $\Omega(\eta/2)$}. The cutoff function modifies $A^\pm$, and $f_{GB}$, outside {$\Omega(\eta/2)$}, but its contribution to \eq{source} is exponentially small in $k$ (see \cite{LRT11}), and we will disregard it from here on.

\subsection{Estimate of $f_{GB}$}

From the non-trapping condition, it follows that 
the length of a ray inside {$\Omega(\eta)$}
is bounded independently of starting point in $|x|\leq R$.
By construction, {$c_\ell^{\pm}(x)$ is bounded and}
\begin{equation}\label{cjform}
  c^\pm_{j}(x)= \sum_{|\beta|=N-2j-2} d^\pm_{\beta,j}(x)(x-x(s))^\beta,
  \qquad j=-2,\ldots,\ell-1,
\end{equation}
where $d^\pm_{\beta,j}(x)$ are bounded on {$\Omega(\eta)$}.
Hence, 
$$
|c^\pm_j(x)|\leq C_j|x-x(s)|^{N-2j-2},\qquad x\in {\Omega(\eta)}.
$$
Choosing  {$\eta$}  sufficiently
small, the construction also ensures that 
\begin{equation}\label{imphiest}
{\rm
Im}\{\phi^\pm\}(x) \geq  c |x-x(s)|^2,\qquad x\in {\Omega(\eta)},
\end{equation}
{see \cite{LRT11}.}
From the bound 
\begin{equation}\label{expest}
s^pe^{-as^2}\leq C_p a^{-p/2}e^{-as^2/2},
\qquad C_p = (p/e)^{p/2}, 
\end{equation}
with $p=N-2j-2$, $a=kc$ and $s=|x-x(s)|$ we then get
for $x\in{\Omega(\eta)}$,
\begin{align}
  |f_{GB}(x)|
  &\leq  e^{-k{\rm
Im}\{\phi^\pm\}(x)} \sum_{j=-2}^\ell |c^\pm_j(x)| k^{-j}
  \leq  e^{-kc|x-x(s)|^2 } \sum_{j=-2}^\ell 
  C_j|x-x(s)|^{N-2j-2} k^{-j}
\nonumber  \\
  &\leq  Ce^{-k{c}|x-x(s)|^2/2 }\sum_{j=-2}^\ell 
k^{-N/2+j+1} k^{-j}
\leq C e^{-kc|x-x(s)|^2/2 } k^{-N/2+1}.\label{fGBest}
\end{align}
We note that the constant is uniform in
$|x|\leq 6R$ and in particular for first order
beams $f_{GB}$ will be $O(k^{1/2}e^{-k\tilde{c}|x-x(s)|^2 })$.

%%%%%%%%%%%%%%%%%%%%%%%%%%%%%%%%%%%%%%%%%%%%%%%%%%%%%%%%%%%%%%%%%%%%%%%%%%%%%%%%
\section{Extension of Gaussian beam solutions to infinity}
%%%%%%%%%%%%%%%%%%%%%%%%%%%%%%%%%%%%%%%%%%%%%%%%%%%%%%%%%%%%%%%%%%%%%%%%%%%%%%%%

  In this section we extend
$u_{GB}(x)$ defined on $|x|\leq 6R$ to an outgoing
solution {$\tilde{u}_{GB}(x)$} in $\R^d$. For estimates on the validity
of the approximation it is essential to do this
so that $$
{\tilde{f}_{GB}=_{def} L_n \tilde{u}_{GB}-g_0\delta(\rho)},
$$
is supported in
$|x|<6R$ and 
is {$o(k)$}.

The main step in the extension is a simplified
version of the procedure used in \cite{MR78}.
Let
$G_{\lambda}(x)$ be the Green's function for the Helmholtz
operator $\Delta + \lambda^2$, where $\lambda$ may be complex valued.
When $\alpha\geq 0$, define
\begin{equation}\label{kalpha}
  k_\alpha := \sqrt{k^2+ik\alpha}.
\end{equation}
Then $L_1=\Delta  +i\alpha k + k^2=
\Delta  +k_\alpha^2$, and 
$G_{k_\alpha}$ is uniquely determined when $\alpha>0$ as the
inverse of the self-adjoint operator $L_1$;
for $\alpha=0$ it can be defined either as
$\lim_{\alpha\downarrow 0}G_{k_\alpha}$ or by
radiation conditions.
In the case $d=3$,
$$
G_{{k_\alpha}}(x)= -(4\pi)^{-1} \left(\frac{e^{ik_\alpha |x|}}{|x|} \right).
$$
To extend $u_{GB}$ we
introduce the cutoff function $\eta_{a}(x)$ in
$C^\infty(\R^d)$ with parameter $a\geq 1$:
\begin{align*}
\eta_a(x) =\left\{ \begin{array}{ll}
1 & |x|<(a-1)R \\0 &|x|>aR
\end{array}
\right.
\end{align*}
(see Figure~\ref{fig:beta_eta_cutoff}) and {define}
\begin{figure}
\includegraphics{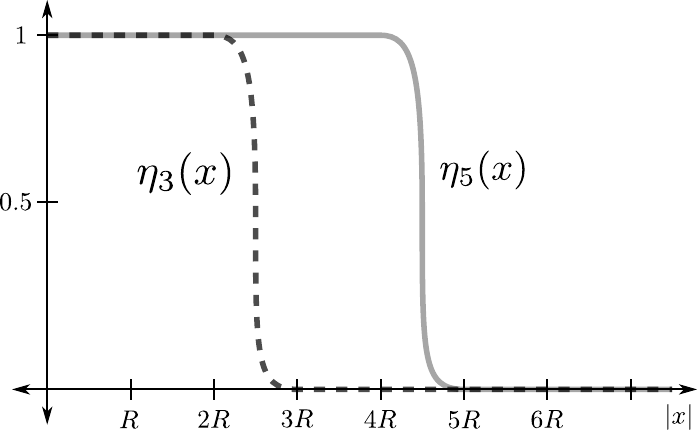}
\caption{The cut off functions $\eta_3(x)$ and $\eta_5(x)$.}
\label{fig:beta_eta_cutoff}
\end{figure}
\begin{equation}\label{h}
\tilde u_{GB} =\eta_3(x) u_{GB}(x) +\int G_{{k_\alpha}}(x-y)\eta_5(y) L_n[(1-\eta_3(y))u_{GB}(y)]dy.
\end{equation}
We also assume that $R$ is chosen large enough such that
the support of $g_0\delta(\rho)\subset \Sigma\cap{\Omega(\eta)}$ 
is inside $\{|x|<R\}$.

Consider {first} $L_n\tilde u_{GB}$ in the region
$\{|x|\geq R\}$. Since $L_n=L_1$ {as well as $g_0\delta(\rho)=0$} in this region
and $\eta_5\equiv 1$ on the support of $\eta_3$,
\begin{align*}
{\tilde{f}_{GB}(x)}&=L_n\tilde u_{GB}(x)=\eta_5(x) L_n[\eta_3(x)
u_{GB}(x)]+\eta_5(x)L_n[(1-\eta_3(x))u_{GB}(x)]\\
&=\eta_5(x)L_n u_{GB}
{=\eta_5(x){f}_{GB}(x)}.
\end{align*}
Since $\eta_5$ is supported on $|x|<5R$,
it follows that
$\tilde{f}_{GB}$ vanishes for $|x|>5R$.

{Consider next the region$\{|x|\leq R\}$
and let $v=\tilde u_{GB}-\eta_3 u_{GB}$, i.e.
the integral term in (\ref{h}).
Since, $\eta_3=1$ on $|x|<R$, we have in this region
$$
   \tilde{u}_{GB}-u_{GB} = v,\qquad
   \tilde{f}_{GB}-f_{GB} = L_nv.
$$
}
In view of the
estimate of {$f_{GB}$} it now suffices to show
that for $|x|\leq R$, $\partial_x^\beta v$ decays
rapidly when $k\to \infty$, for all
multi-indices, $|\beta|\leq 2$.

By the definition of the two cut-off functions,  we
have for $|x|\leq R$

\begin{align*}
v(x) &=\int_{\R^d}  G_{{k_\alpha}}(x-y)\eta_5(y) L_n[(1-\eta_3(y))u_{GB}(y)]dy\\
 & = \int_{2R\leq |y|\leq 5R} G_{k_\alpha}(x-y)\eta_5(y) L_1[(1-\eta_3(y))u_{GB}(y)]dy  .
\end{align*}

The fundamental solution  $G_{k_\alpha}$ has the form
$$G_{{k_\alpha}}(x)=
{e^{i{k_\alpha}|x|}\over|x|^{(d-1)/2}}w(x;{k_\alpha}),$$ where $w$
and its derivatives in $x$ are bounded by
{$|k_\alpha|^{\frac{d-3}{2}}\leq C k^{\frac{d-3}{2}}$} 
on compact subsets of $|x|\geq R
$, see Appendix.
Since $n(x)\equiv 1$ for $|x|>R$, in that
region $x(s)$ is a straight line and
$\nabla_x\phi^\pm(x(s))$ is a constant unit
vector. Since $x(s)$ is going out of $|x|\leq R$
when it crosses $|x|=R$,  at $x(s)=y$ with
$|y|\geq 2R$ the phases in $u_{GB}$ satisfy
$\nabla_x\phi^\pm(x(s))\cdot y\geq \cos (\pi/6)|y|$.
 Likewise when $|x|\leq R$
and $|y|\geq 2R$, $(y-x)\cdot y\geq
|y||y-x|\cos(\pi/6)$ (see
Figure~\ref{fig:exit-angle}). The form of
$u_{GB}$ (see (\ref{wkb})) gives the integrand in
(\ref{h}) the form $e^{ik\psi}b(y,k)$ with
$\psi(y)=\phi^\pm(y)+{(k_\alpha/k)}|x-y|$ and $b$ smooth in $y$,
{bounded together with its derivatives by $Ck^{\frac{d-3}{2}}$.}
Note that
$$
\nabla_y \psi= \frac{k_\alpha}{k} \frac{y-x}{|y-x|}+\nabla_y \phi^\pm.
$$
The preceding remarks show that, when $|x|\leq R$
and $k$ large, $\nabla_y \psi$ does not vanish on
the support of the integrand in (\ref{h}). Hence we can
use the identity
 $$e^{ik\psi}={\overline {{\nabla}_y\psi}\over ik |{\nabla}_y \psi|^2}\cdot{\nabla}_y(e^{ik\psi})$$
  and integrate by parts to  show that $v$ and its derivatives are order 
  {$k^{-m}$ for any $m$}.

This completes the verification of the
extension. We have shown that
\begin{equation}\label{ftildeGBest}
  \tilde{f}_{GB}(x) = {\eta_5(x)[f_{GB}(x)+r(x)]},
  \qquad 
||r||_{L^2(|x|<5R)} = O(k^{-m}).
\end{equation}
Hence, the size of $\tilde{f}_{GB}$ is of the same order as the size of $f_{GB}$, which is 
$O({k^{-N/2+1}}e^{-k\tilde{c}|x-x(s)|^2 })$.
Moreover,
\begin{equation}\label{uest}
||u_{GB}-\tilde{u}_{GB}||_{L^2(|x|<R)} = {O(k^{-m}),}
\end{equation}
{for any $m$.}
Note that, since $\tilde u_{GB}$ is
represented by $G_{\alpha,k}$ for $|x|$ large, it
is square-integrable ($\alpha>0$) or outgoing
($\alpha=0$).

\begin{figure}
\includegraphics{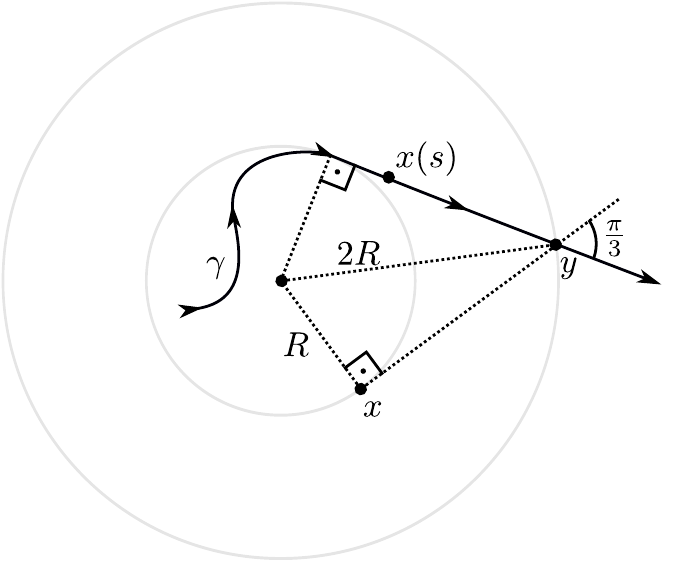}
\caption{Maximum angle.}
\label{fig:exit-angle}
\end{figure}

%%%%%%%%%%%%%%%%%%%%%%%%%%%%%%%%%%%%%%%%%%%%%%%%%%%%%%%%%%%%%%%%%%%%%%%
%%%%%%%%%%%%%%%%%%%%%%%%%%%%%%%%%%%%%%%%%%%%%%%%%%%%%%%%%%%%%%%%%%%%%%%

\section{The Error Estimate for $u_{GB}$}
In this section we will use an estimate showing
that the radiation problem is well-posed due to
Vainberg \cite{V75} and \cite{V89}. This will
give estimates on the accuracy of $u_{GB}$ as an
approximation to the exact solution in the region
$|x|\leq R$. Vainberg starts with the initial
value problem for wave equation in $\Bbb
R^d_x\times \Bbb R_t$
$$v_{tt}- n^{-2}\Delta
v=0, \quad v(0)=0, \quad v_t(0)=-n^{-2}g$$ and takes the
Fourier-Laplace transform
\begin{equation}\label{fl}
u(x,k)=\int_0^\infty e^{i\lambda t}v(t,x)dt
\end{equation}
to get the solution to
$$\Delta u + \lambda^2n^2u=g$$
satisfying radiation conditions. Taking advantage
of finite propagation speed, and the propagation
of singularities to infinity, he can estimate $u$
on bounded regions from the integral
representation (\ref{fl}), when $g$ has bounded
support {and the nontrapping condition holds}. In the notation of \cite{V75},
$u=[{{\mathcal R}}_\lambda](n^{-2}g)$,  where ${{\mathcal R}}_\lambda$ is
the operator
$${{\mathcal R}}_\lambda= (\lambda^2 +n^{-2}\Delta)^{-1}.$$
This is defined  for complex $\lambda$ as the
analytic continuation of  ${{\mathcal R}}_\lambda$ restricted
to the space $H^m_a$ with range in the space
$H^m(|x|<b)$.  The estimates take the following
form: there are constants $C$ and $T$ such that
\begin{equation}\label{est}
||{{\mathcal R}}_\lambda g||_{m+2-j,{(b)}}\leq C|\lambda|^{1-j}e^{T|{\rm Im
}\, \lambda |}||g||_{m,a},\ 0\leq j\leq 3.
\end{equation}
Here the norms are standard Sobolev norms on
$H^m_a(\Bbb R^d)$, the closure of
$C^\infty_c(|x|<a)$ in $||\cdot ||_m$, and
$H^m(|x|<b)$. One can assume that $b<a$. The
admissible set of $\lambda$ here is the set
$$U_{c_1, c_2}=\{\lambda \in \Bbb C: |\hbox{Im
}\lambda|<c_1 \log |\hbox{Re }\lambda|-c_2\}$$
for some $c_1, c_2>0$.   If $d$ is even, then one
has to add the condition
$$
-\pi/2 <\text{arg} \, \lambda <3\pi/2.
$$
This is Theorem 3 for $d$ odd and Theorem 4 for
$d$ even in \cite{V75}.

Here we will apply (\ref{est}) with
$g=n^{-2}\tilde f_{GB}$, $a=6R$, $b=R$ and
$\lambda{=k_\alpha} \in \Bbb C$
{with $k_\alpha$ defined in (\ref{kalpha}).} 
This makes $n^2{{\mathcal R}}_{k_\alpha}
g=\tilde u_{GB}-u_E$, where $u_E$ is the exact
solution to the radiation problem 
{(\ref{ho}) with $f=g_0
\delta(\rho)$ defined in (\ref{source}).}
Taking $m=0$
and $j=2$, we have
\begin{equation}\label{E-}
||{\tilde{u}_{GB}}-u_E||_{L^2(|x|<R)}\leq C|{k_\alpha}|^{{-1}}e^{T|{\small\rm Im}\;
k_\alpha |} ||\tilde
f_{GB}||_{L^2}.
\end{equation}
Note that 
$|k_\alpha|= k\left( 1+(\alpha/k)^2\right)^{1/4}$
and
$$
 |\text{Im}\; k_\alpha|=\frac{\alpha}{\sqrt{2}}  \left( \left(1+(\alpha/k)^2\right)^{1/2}-1\right)^{1/2},  \qquad  | \text{Re}\; k_\alpha|=\frac{k}{\sqrt{2}}
  \left( \left(1+(\alpha/k)^2\right)^{1/2}+1\right)^{1/2}.
$$
Hence $|\text{Im}\; k_\alpha|\leq C$, ${k_\alpha}\in U_{c_1,c_2}$ for some
$c_1,c_2>0$ and $|k_\alpha|>k$, so
\begin{equation}\label{E}
||u_{GB}-u_E||_{L^2(|x|<R)}\leq 
C|k|^{-1} ||\tilde
f_{GB}||_{L^2}+||\tilde{u}_{GB}-u_{GB}||_{L^2(|x|<R)},
\end{equation}
uniformly in terms of $\alpha$.  The estimates in
{\eq{ftildeGBest} and \eq{uest}}
ensure that
\begin{equation}\label{rayest}
||u_{GB}-u_E||_{L^2(|x|<R)}\leq
C|k|^{-1} ||
f_{GB}||_{L^2(|x|<5R)}.
\end{equation}
We observe here that
since \eq{ftildeGBest} and \eq{uest} hold uniformly for all 
beam starting points
$x_0\in\Sigma$ the estimate \eq{rayest} will also hold for
linear superpositions of beams, which we will discuss further below,
see \eq{utotest}.
Moreover, from \eq{fGBest} and the estimate \eq{degat} derived below,
we obtain
\begin{align*}
 ||f_{GB}||^2_{L^2(|x|<5R)}&\leq C k^{-N+2} \int_{{\Omega(\eta)}}
 e^{-2k\tilde{c}|x-x(s)|^2 } dx
 \leq C k^{-N+2+(1-d)/2}.
\end{align*}
This finally shows that for a single beam $u_{GB}$,
$$
||u_{GB}-u_E||_{L^2(|x|<R)}\leq C k^{-N/2-\sigma_d},\qquad \sigma_d = \frac{d-1}{4}.
$$
Note that the factor $k^{-\sigma_d}$ corresponds to the size of the $L^2$ norm of the
beam itself in $d$ dimensions, $||u_{GB}||_{L^2(|x|<R)}\sim k^{-\sigma_d}$, showing that the relative
error of the beam is bounded by $k^{-N/2}$.

\section{An  Example}
Using the notation
$x=(x_1,x^\prime)=(x_1,x_2,x_3)$, the  outgoing
solution to $$\Delta u+k^2u=2 ik
e^{-k|x^\prime|^2/2}\delta(x_1)$$ is given by
\begin{equation}\label{ii}
u(x,k)={-2i k\over 4\pi}\int_{\Bbb R^2}{e^{ik|x-(0,y^\prime)|-k|y^\prime|^2/2}\over |x-(0,y^\prime)|}dy^\prime.
\end{equation}
In this section we compare the approximation that
one gets by using the method of stationary phase
on this integral to the approximation given by
$u_{GB}$. The stationary phase approximation is
not uniform in $x^\prime$, and for $x^\prime\neq
0$ it simply gives $u(x_1,x^\prime,k)=O(k^{-N})$
for all $N$. However, when $x^\prime=0$, it gives
$u_{GB}(x_1,0)$.

The procedure for constructing $u^+$ given
earlier with  the source
$2ike^{-|x^\prime|^2/2}\delta(x_1)$, gives
$x(s)=(2s,0,0)$, $p(s)=(1,0,0)$, $S(s)=2s$,
$M(s)= {i\over 1+2is}P$ and $A(s)=(1+2is)^{-1}$,
where $P$ is the orthogonal projection on $\hat
e_1^\perp$. For $u^-$ one gets the same
results with $s$ replaced by $-s$ and $p(s)$
replaced by $-p(s)$. The definition of  $s(x)$
gives $s(x)=|x_1|/2$, and we have
\begin{equation}\label{gb}
u_{GB}(x,k)=(1+i|x_1|)^{-1}e^{ik \phi},\hbox{ where }\phi= |x_1|+{i\over2(1+i|x_1|)}|y^\prime|^2.
\end{equation}
\vskip.1in To apply stationary phase to
\eqref{ii} assume that $x_1\neq 0$. Then the
phase
 is given by $\psi(x,y^\prime)=|x-(0,y^\prime)|+i|y^\prime|^2/2$ and
 $$\psi_{y^\prime}= {y^\prime-x^\prime\over| x-(0,y^\prime)|}+iy^\prime.$$
 That vanishes and is real only when $y^\prime=x^\prime=0$. Then one has
 $$\psi_{y^\prime y^\prime}|_{x^\prime=y^\prime=0}=\left({1\over |x_1|}+i\right)I_{2\times 2}.$$
The stationary phase lemma (\cite{Hormander})
gives
\begin{equation}\label{sp}
u(x_1,0)={2\pi\over k}(\hbox{det}(-i\psi_{y^\prime y^\prime}(x_1)))^{-1/2}\left({-2i k\over 4\pi}{e^{ik|x_1|}\over |x_1|} + O(1)\right).
\end{equation}
Since
$$\hbox{det}(-i\psi_{y^\prime y^\prime}(x_1))=\left({-i\over |x_1|}+1\right)^2,$$
{and the choice of square root leads to
$$\left(\left({-i\over |x_1|}+1\right)^2\right)^{-1/2}=\left({-i\over |x_1|}+1\right)^{-1},$$
}one sees that the leading term in
\eqref{sp} is exactly \eqref{gb}.

\section{ Error Estimates for Superpositions}
Given a point $z \in\Sigma$, we relabel the
primitive source term $g_0$ in {\eq{source}} as
\begin{equation}\label{S}
g(x,z,k)=[ik\zeta_1(x)+\zeta_2(x)]e^{-k|x-z|^2/2}\delta(\rho),
\end{equation}
where $\zeta_j \in C_c^\infty$ and {$\zeta_1(x)=1$ on a
neighborhood of $x=z$}. Denoting the resulting beam
as  $u_{GB}(x; z)$, the error estimate \eqref{E}
is uniform in $z$ as long as $z$ remains in a
compact subset of $|x|<R$, for instance $|z|\leq
R/2$. If we let $z$ range over $\Sigma$, we can
form
\begin{equation}\label{source2}
g(x,k)\delta(\rho)= \left(\frac{k}{2\pi}\right)^{(d-1)/2} \int_{\Sigma}g(x,z, k)h(z)dA_z,
\end{equation}
and
\begin{equation}\label{super}
u(x)=\left(\frac{k}{2\pi}\right)^{(d-1)/2} \int_\Sigma u_{GB}(x; z)h(z)dA_z
\end{equation}
is an approximation to the exact solution for the
source $g(x,k)\delta(\rho)$ satisfying the
estimate \eqref{E}.

We now state the main result of error estimates
for superposition (\ref{super}).
\begin{thm}\label{ee} Assume that  $n(x)$ is smooth, non-trapping, positive and equal to 1 when $|x|>R$.    Let $u_E$ be the exact solution to (\ref{ho}) with the source $f=g(x, k)\delta(\rho)$ in \eq{source2}, and $u$ be the Gaussian beam superposition defined in  (\ref{super})
based on {$N$-th order beams}.  We then have the following estimate
\begin{equation}\label{result}
\|u-u_E\|_{L^2(|x|\leq R)} \leq {Ck^{-N/2},}
\end{equation}
where $C$ is independent of $k$ but may
depend on $R$.
\end{thm}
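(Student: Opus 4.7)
My plan is to reduce the theorem to an $L^2$ estimate on the superposed residual via linearity, and then to exploit the complex Gaussian structure of the beams to extract the $k^{-(d-1)/2}$ saving that a naive Minkowski bound would lose. By linearity of $L_n$ and of the radiation solution operator, the superposition $u$ defined by \eqref{super} satisfies $L_n u = g(x,k)\delta(\rho) + F_{GB}(x)$ with
\[
F_{GB}(x) := \left(\frac{k}{2\pi}\right)^{(d-1)/2} \int_\Sigma f_{GB}(x;z)\, h(z)\, dA_z.
\]
Since \eqref{rayest} holds uniformly in the starting point $z\in\Sigma\cap\{|z|\leq R/2\}$ (as noted in the excerpt), applying it to the superposition yields $\|u-u_E\|_{L^2(|x|<R)} \leq C k^{-1} \|F_{GB}\|_{L^2(|x|<5R)}$. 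It therefore suffices to prove $\|F_{GB}\|_{L^2(|x|<5R)} = O(k^{-N/2+1})$.

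To get this, I would expand the $L^2$ norm squared by Fubini,
\[
\|F_{GB}\|_{L^2}^2 = \left(\frac{k}{2\pi}\right)^{d-1} \iint_{\Sigma\times\Sigma} h(z_1)\overline{h(z_2)}\, K(z_1,z_2)\, dA_{z_1}\, dA_{z_2},
\]
with $K(z_1,z_2) = \int_{|x|<5R} f_{GB}(x;z_1)\,\overline{f_{GB}(x;z_2)}\, dx$. By \eqref{fGBform} the kernel splits into four pieces with complex phase $k[\phi^a(x;z_1) - \overline{\phi^b(x;z_2)}]$, $a,b\in\{+,-\}$. By \eqref{imphiest} the imaginary part of each phase is bounded below by $c(|x-x_1|^2 + |x-x_2|^2)$, where $x_j := x(s(x);z_j)$ is the beam center; by \eqref{cjform}, \eqref{expest} and \eqref{fGBest} the amplitudes are polynomial in the same transverse distances, with overall size $k^{-N/2+1}$. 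Completing the square in $|x-x_1|^2+|x-x_2|^2$ and integrating in $x$ produces a kernel bound of the form
\[
|K(z_1,z_2)| \;\leq\; C\, k^{-N+2-d/2}\, e^{-ck|x_1 - x_2|^2}.
\]

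The critical remaining input is a non-squeezing estimate, namely $|x(s;z_1) - x(s;z_2)| \geq c'|z_1 - z_2|$ along the relevant rays, or (in the presence of caustics) a suitably averaged version after $x$-integration. Granted such a bound, integrating $e^{-c'k|z_1-z_2|^2}$ over one copy of $\Sigma$ contributes a factor $k^{-(d-1)/2}$, so collecting all the exponents,
\[
\|F_{GB}\|_{L^2}^2 \;\leq\; C\, k^{\,(d-1) - N + 2 - d/2 - (d-1)/2} \;=\; C\, k^{3/2 - N},
\]
and hence $\|F_{GB}\|_{L^2} = O(k^{-N/2+3/4})$, which is sharper than required and, combined with the $k^{-1}$ from the reduction step, gives \eqref{result}.

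The main obstacle is precisely this non-squeezing estimate in the presence of caustics. At a caustic the map $z\mapsto x(s;z)$ is singular at some $s$, so the pointwise lower bound on $|x_1-x_2|$ can fail, and one cannot hope to decouple the $x$-integration from the $(z_1,z_2)$-integration. Following the strategy of \cite{LRT11} in the time-dependent setting, the cure is to use the symplectic structure of the beam matrix $M(s)$ from \eqref{Mprop}: the real and imaginary parts of the phase quadratic form in $(z_1,z_2)$ together remain nondegenerate, so positivity is recovered after $x$-integration via a joint stationary-phase argument in $(x,z)$. Adapting this mechanism to the Helmholtz bicharacteristic flow \eqref{bich} and to a superposition over the hypersurface $\Sigma$ rather than over all of physical space is the main technical work I would have to carry out.
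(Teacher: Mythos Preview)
Your reduction to estimating $\|F_{GB}\|_{L^2(|x|<5R)}$ via \eqref{rayest}, and the idea of expanding the squared norm as a double integral over $\Sigma\times\Sigma$, are both correct and are exactly what the paper does. There is a minor slip in the kernel bound: the points $x_j=x(s(x);z_j)$ depend on $x$, so after $x$--integration $K(z_1,z_2)$ cannot literally contain $e^{-ck|x_1-x_2|^2}$; moreover the $x$--integral over the tubular neighborhood yields a factor $k^{(1-d)/2}$, not $k^{-d/2}$ (only $d-1$ transverse directions carry Gaussian decay). With the correct power your exponent count recovers the paper's $\|F_{GB}\|^2=O(k^{2-N})$ rather than the over-optimistic $k^{3/2-N}$.

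The genuine gap is your resolution of the caustic case. You correctly flag that $|x(s;z_1)-x(s;z_2)|\geq c'|z_1-z_2|$ fails at caustics, but the fix is not a ``joint stationary-phase argument in $(x,z)$'' based on nondegeneracy of $M$. What the paper actually uses is the \emph{full phase-space} non-squeezing inequality
\[
|p(s;z_1)-p(s;z_2)|+|x(s;z_1)-x(s;z_2)|\;\geq\;c_1|z_1-z_2|,
\]
which survives caustics because the Hamiltonian flow is a diffeomorphism on $T^*\mathbb{R}^d$. This gives a dichotomy handled by a partition of unity in $(x,z,z')$: either $|\gamma-\gamma'|\gtrsim|z-z'|$, in which case your Gaussian argument applies directly (the paper's $I_1$); or $|\gamma-\gamma'|\ll|z-z'|$, in which case the momentum separation forces the $x$--gradient of the combined phase $\psi=\phi(\cdot;z')-\overline{\phi(\cdot;z)}$ to be large,
\[
|\nabla_x\psi|\;\geq\;|p(s;z')-p(s;z)|-O(\eta+\theta)|z-z'|\;\geq\;C|z-z'|,
\]
since $\nabla_x\phi=p$ on the ray. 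One then applies \emph{non-stationary phase in $x$ alone} (with $z,z'$ as parameters), integrating by parts $K$ times to gain $(k|z-z'|^2)^{-K}$ and choosing $K>d-1$ (the paper's $I_2$). The role of $M$ is only indirect, through $\nabla_x\phi$; the mechanism is the interplay between spatial and momentum separation, not positivity of a quadratic form in $(z_1,z_2)$.
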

In order to simplify the notation,
we specify
$\rho(x)=x_1$ and  $y=(0, z)$ for $z\in \Sigma
\subset \mathbb{R}^{d-1}$.  The superposition
thus can be written as
\begin{equation}\label{super+}
u_{}(x)=\left(\frac{k}{2\pi}\right)^{(d-1)/2} \int_{\Sigma} u_{GB}(x; z)h(z)dz,
\end{equation}
and the residual
\begin{equation}\label{fres}
L_n{u}-L_n u_{E} =
   f(x) =\left(\frac{k}{2\pi}\right)^{(d-1)/2} \int_{\Sigma} f_{GB}(x; z)h(z)dz.
\end{equation}
By the definition of $u_E$ and the source $g(x, k)\delta(\rho)$,
the residual
$f$ contains only regular terms. 
We can therefore extend the superposition $u$ to $\tilde{u}$ 
in the same way as in \S 3, and define
$\tilde{f}=L_n \tilde{u}-L_n u_{E}$.
As observed above, \eq{ftildeGBest} and \eq{uest} hold uniformly for all $z\in\Sigma$, and
the same steps as in \S 4 therefore lead to an estimate corresponding to \eq{rayest},
namely
\begin{equation}\label{utotest}
||u - u_E||_{L^2(|x|\leq R)}\leq Ck^{-1}||
f ||_{L^2(|x|<5R)}.
\end{equation}
{We let $x(s;z)$ be the ray originating in $z$, $x(0,z)=z$ and
we denote by
$\Omega(\eta;z)$ the corresponding  tubular neighborhood of
radius $\eta$, in the ball $\{|x|\leq 5R\}$.}
By choosing $\eta>0$ sufficiently small, we can
thus ensure that  $s=s(x;z)$ is well defined {on $\Omega(\eta;z)$}.  
In what  follows we denote  $x(s(x, z);
z)$ by $\gamma$ or $\gamma(x; z)$.  
Moreover, we introduce the cutoff function $\varrho_\eta(x)\in C^\infty(\Real^d)$ as
 \begin{align}\label{cutoff}
\varrho_\eta(x) \geq 0 \quad \mbox{ and } \quad \varrho_\eta(x)= \left\{ \begin{array}{ll} 1 \mbox{ for } |x|\leq \eta/2, \\
0 \mbox{ for } |x|\geq \eta, \end{array}
\right.
\end{align}
{such that $\varrho_\eta(x-\gamma(x; z))$ is supported on
$\Omega(\eta;z)$ and is identically one on $\Omega(\eta/2;z)$.}
The form \eq{fGBform} of $f_{GB}(x;z)$ will then be
\begin{align*}
f_{GB}(x;z)
&=\left(e^{ik\phi^+(x;z)}{\sum_{j=-2}^{\ell}}c^+_j(x;z)k^{-j}+e^{ik\phi^-(x;z)}{\sum_{j=-2}^{\ell}}c^-_{j}(x;z)k^{-j}\right)
\varrho_\eta \left(x-\gamma\right) + O(k^{-\infty})\\
&=\sum_{\alpha} k^{j_\alpha}e^{ik\phi_\alpha(x;z)}d_\alpha(x;z)(x-\gamma)^{\beta_\alpha}
\varrho_\eta \left(x-\gamma\right) + O(k^{-\infty}),
\qquad
\end{align*}
with bounds
$$
{|\beta_\alpha|\leq N+2,\qquad
2j_\alpha\leq {2-N+|\beta_\alpha|}.}
$$
The sum over $\alpha$ is finite,
$d_\alpha$ involves the functions $d_{\beta,j}^\pm$ in \eq{cjform}
and $\phi_\alpha$ is either $\phi^+$ or $\phi^-$.
Moreover, $O(k^{-\infty})$ indicates terms exponentially small
in $1/k$.
After neglecting these terms and
using \eq{fres} it follows that
we can bound the $L^2$ norm of $f$ by
\begin{align*} 
\|f\|^2_{L^2(|x|\leq 5R)}
&\leq Ck^{d-1}
\sum_{\alpha}
\left\|
\int_{\Sigma}{k^{\frac{2-N+|\beta_\alpha|}2}}
e^{ik\phi_\alpha}d_\alpha(x-\gamma)^{\beta_\alpha}
\varrho_\eta h 
dz\right\|^2_{L^2(|x|\leq 5R)}\\
&= {Ck^{d-N}}\sum_\alpha
\int_{|x|\leq 5R} \int_{\Sigma}\int_{\Sigma} I_\alpha(x,z, z')dzdz'dx,
\end{align*}
where the terms $I_\alpha$ are of
the form
\begin{align*}
  I_\alpha(x,z, z') &= k^{1+|\beta|} e^{ik \psi(x,z,z')}
       g(x; z')\overline{g(x; z)} \\
    &\qquad  \times  \left(x-\gamma \right)^\beta \left(x- \gamma' \right)^\beta
       \varrho_\eta\left(x-\gamma\right)\varrho_\eta \left(x-\gamma' \right), \quad |\beta|\leq 3.
\end{align*}
Here $g(x;z)=d_\alpha(x;z)h(z)$
and
\begin{align}\label{ps}
\psi(x, z, z'):= \phi(x; z')-\overline{\phi(x; z)},
\end{align}
with $\phi$ being either of $\phi^{\pm}$.
The function $g$ and its derivatives are bounded,
\begin{equation}\label{g}
\sup_{z\in \Sigma, x\in {\Omega(\eta;z)}}|\partial_x^\lambda g(x;z)|\leq C_\lambda,
\end{equation}
for any $|\lambda| \geq 0$.

Let $\chi_j(x,z,z')\in C^{\infty}$ be a partition
of unity such that
$$
  \chi_1(x,z,z') = \begin{cases} 1, & {\rm when}\ |\gamma(x,z)-\gamma(x,z')|> \theta |z-z'|,\\
 0, & {\rm when}\ |\gamma(x,z)-\gamma(x,z')|< \frac12\theta |z-z'|
  \end{cases}
$$
and $\chi_1+\chi_2 = 1$. Moreover, let
$$
I_1=\chi_1(x,z,z')I_\alpha(x,z, z') ,\qquad
I_2=\chi_2(x,z,z')I_\alpha(x,z, z') ,
$$
so that $I_\alpha(x,z, z')=I_1+I_2$.

The rest of this section is dedicated to establishing the
following {inequality}
\begin{equation}\label{Izz}
\left| \int_{|x|\leq 5R}\int_{\Sigma}\int_{\Sigma} I_j(x,z, z')dxdzdz' \right|\leq Ck^{2-d}
\end{equation}
for $j=1,2$. With this estimate  we  have 
$\|f\|_{L^2(|x|\leq 5R)}\leq {Ck^{1-N/2}}$, which together
with \eq{utotest}
lead to the desired estimate (\ref{result}).

A key ingredient in establishing estimate
(\ref{Izz}) is a slight generalization of
the non-squeezing lemma obtained
in \cite{LRT11}.  It says that the distance in phase space between
two smooth Hamiltonian trajectories at two parameter values
$s$ that depends smoothly on the initial position $z$,
will not shrink from its initial
distance, even in the presence of caustics.
The lemma is as follows:

\begin{lem}[Non-squeezing lemma]\label{nonsqueezing}
Let  $X=(x(s; z), p(s; z))$ be the
bi-characteristics starting from $z \in \Sigma$ {with $\Sigma$ bounded}. Assume that $p(0;
z)\in C^2(\Sigma)$
{is perpendicular to $\Sigma$ for all $z$}, that $|p(0;z)|=n(z)$ and
that $\inf_{z} n(z)=
n_0 >0$. 
Let $S(z)$ be a Lipschitz continuous function
on $\Sigma$ with Lipschitz constant $S_0$.
Then, there exist
positive constants $c_1$ and $c_2$ depending on
$L$, $S_0$ and $n_0$, such that
\begin{align}\label{NonSqueezeIneq}
   c_1 |z-z'| \leq |p(S(z); z)-p(S(z'); z')|+|x(S(z);z)-x(S(z');z')|\leq c_2 |z-z'| \ ,
\end{align}
for all $z,z'\in \Sigma $ and $|S(z)|, |S(z')|\leq L$.
\end{lem}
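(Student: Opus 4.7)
The plan is to establish the two bounds separately. The upper bound will follow from a routine Gr\"onwall estimate on the bi-characteristic flow combined with the Lipschitz continuity of $S$. The substantive content is the lower bound, for which I would first derive a uniform infinitesimal injectivity estimate on the differential $D\Psi(s,z)$, where $\Psi(s,z):=(x(s;z),p(s;z))$, and then upgrade it to a global bi-Lipschitz bound on $\Phi(z):=\Psi(S(z),z)$ by combining Taylor's theorem on small scales with a compactness argument on large scales.

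For the upper bound, differentiating \eq{bich} in $z$ produces a linear variational ODE along each ray whose coefficients are uniformly bounded for $|s|\leq L$; Gr\"onwall then gives $|\partial_z\Psi(s,z)|+|\partial_s\Psi(s,z)|\leq C_L$ on $[-L,L]\times\Sigma$. Writing out the finite increment of $\Psi$ along a path joining $(S(z),z)$ and $(S(z'),z')$ and using $|S(z)-S(z')|\leq S_0|z-z'|$ yields the upper bound with $c_2=C_L(1+S_0)$.

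The infinitesimal lower bound asserts $|D\Psi(s,z)w|\geq c_0|w|$ uniformly on $[-L,L]\times\Sigma$. At $s=0$, $\partial_s\Psi(0,z)=(2n(z)\nu(z),-\nabla n^2(z))$ has $x$-component $2n(z)\nu(z)$ perpendicular to $\Sigma$ with magnitude $\geq 2n_0$, while the $x$-components of $\partial_{z_j}\Psi(0,z)$ span $T_z\Sigma$ with uniformly nondegenerate Gram matrix. Hence the $x$-block of $D\Psi(0,z)$ is a $d\times d$ matrix whose determinant is bounded below uniformly in $z$. Because the linearized Hamiltonian flow $d\phi_s$ and its inverse have bounded norms on $|s|\leq L$ (Gr\"onwall on the variational system), this injectivity persists: $|D\Psi(s,z)w|\geq c_0|w|$ for all admissible $(s,z,w)$. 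Applied via the chain rule $D\Phi(z)\cdot v = D\Psi(S(z),z)(dS(z)\cdot v, v)$, and using $|(dS\cdot v,v)|\geq|v|$, this gives $|D\Phi(z)v|\geq c_0|v|$, so $\Phi$ is locally bi-Lipschitz with constant $c_0/2$ on neighborhoods of some uniform radius $\delta>0$.

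For the global lower bound, when $|z-z'|<\delta$ the local estimate already yields $|\Phi(z)-\Phi(z')|\geq (c_0/2)|z-z'|$. When $|z-z'|\geq\delta$, the continuous ratio $|\Phi(z)-\Phi(z')|/|z-z'|$ on the compact set $\Sigma\times\Sigma\cap\{|z-z'|\geq\delta\}$ attains a positive infimum $c_1''$ provided $\Phi$ is globally injective, and one sets $c_1=\min(c_0/2,c_1'')$. The main obstacle is this global injectivity: $\Phi(z)=\Phi(z')$ with $z\neq z'$ forces, by the flow property, $(z,n(z)\nu(z))=\phi_{S(z')-S(z)}(z',n(z')\nu(z'))$, i.e.\ a bi-characteristic would have to leave $\Sigma$ perpendicularly at $z'$ and return perpendicularly to $\Sigma$ at $z$ within time $|S(z')-S(z)|\leq 2L$. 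This is excluded under the paper's non-trapping geometry: by time $|s|=L$ every ray originating in $\{|x|<R\}$ has exited $\{|x|\leq 2R\}$ and thereafter travels as a straight line with $n\equiv 1$, so perpendicular re-entry into $\Sigma\subset\{|x|<R\}$ cannot occur. This rules out the pathological case and completes the proof.
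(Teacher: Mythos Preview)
Your route to the lower bound is genuinely different from the paper's. The paper invokes the constant-$s$ version of the lemma from \cite{LRT11} as a black box and then controls the time-slip $|S(z)-S(z')|$ through the conserved orthogonality $p(s;z)^{\top}D_zx(s;z)=0$, which it derives directly from $H\equiv 0$ along the flow together with the perpendicular initial data. You instead argue that $D\Psi(s,z)=d\phi_s\,D\Psi(0,z)$, so the uniform lower singular-value bound at $s=0$ (coming from the block structure $\partial_s x\perp\partial_{z_j}x$) propagates via the bounded invertibility of $d\phi_s$. This is more self-contained and sidesteps the citation to \cite{LRT11}; the paper's identity is in effect the Lagrangian-subspace shadow of your symplectic-flow argument.

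There is, however, a real gap in your passage from the infinitesimal to the finite estimate. Writing $D\Phi(z)v=D\Psi(S(z),z)(dS(z)v,v)$ requires $S$ to be differentiable, whereas only Lipschitz continuity is assumed; and an a.e.\ lower bound on $|D\Phi|$ does not by itself give a local bi-Lipschitz estimate with uniform radius. The repair is easy and is essentially what the paper does: Taylor-expand the \emph{smooth} map $\Psi$, not $\Phi$, to obtain
\[
\Phi(z)-\Phi(z')=D\Psi\bigl(S(z'),z'\bigr)\bigl(S(z)-S(z'),\,z-z'\bigr)+O\bigl(|z-z'|^2\bigr),
\]
and then apply your bound $|D\Psi w|\geq c_0|w|$ to the finite increment $w=(S(z)-S(z'),z-z')$, noting $|w|\geq|z-z'|$. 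Separately, your non-trapping justification of global injectivity only rules out perpendicular re-entry at times $|S(z')-S(z)|>L$; for shorter times it says nothing. The paper disposes of the large-$|z-z'|$ regime by a compactness-plus-uniqueness assertion which, read carefully, rests on the same implicit standing assumption (analogous to the ``no self-intersection'' simplification made earlier) that two distinct perpendicular launches from $\Sigma$ do not lie on a common bicharacteristic within the time window.
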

\begin{proof}
With the assumptions given here,  
the non-squeezing lemma proved in \cite{LRT11} states that
there are positive constants $0<d_1\leq d_2$ such that
\begin{equation}\label{origns}
  d_1 |z-z'| \leq |p(s; z)-p(s; z')|+|x(s;z)-x(s;z')|\leq d_2 |z-z'| \ ,
\end{equation}
for all $z,z'\in \Sigma $ and $|s|\leq L$, i.e.\mbox{} essentially the case
$S(z)\equiv$ constant.
Since the Hamiltonian for the flow \eq{bich} is regular for all $p$, $x$, and
the initial data $p(0;z)$ is $C^2(\Sigma)$, the derivatives $\partial^{\alpha}_{s,z}x$
and $\partial^{\alpha}_{s,z}p$ with $|\alpha|\leq 2$ 
are all bounded on $[-L,L]\times \Sigma$ by a constant $M$.
Then, for the right inequality
in \eq{NonSqueezeIneq}, we have
\begin{align*}
\lefteqn{
|p(S(z); z)-p(S(z'); z')|+|x(S(z);z)-x(S(z');z')|} \hskip 15 mm & \\
\leq{}& 
|p(S(z); z)-p(S(z'); z)|+|p(S(z');z)-p(S(z');z')|\\
& +
|x(S(z); z)-x(S(z'); z)|+|x(S(z');z)-x(S(z');z')|\\
\leq{}& 
2M |S(z)-S(z')| +
d_2|z-z'| \leq 
(2MS_0 +
d_2)|z-z'| =: c_2 |z-z'|,
\end{align*}
by \eq{origns} and the Lipschitz continuity of $S(z)$.
For the left inequality in \eq{NonSqueezeIneq},
\begin{align}\label{nsest1}
\lefteqn{  |x(S(z);z)-x(S(z');z')|+|p(S(z);z)-p(S(z');z')| }
\hskip 15 mm &\\
 \geq{}&
  |p(S(z);z)-p(S(z);z')|-
  |p(S(z);z')-p(S(z');z')|\notag\\
& + |x(S(z);z)-x(S(z);z')|-
  |x(S(z);z')-x(S(z');z')|\notag\\
  \geq{}&
  d_1 |z-z'| -
  |p(S(z);z')-p(S(z');z')|-
  |x(S(z);z')-x(S(z');z')|\notag\\
  \geq{}&
  d_1 |z-z'| - 2M|S(z)-S(z')|,\notag
\end{align}
where we again used \eq{origns}.
Next we will estimate $|S(z)-S(z')|$ using
$|x(S(z');z')-x(S(z);z)|$.
From Taylor expansion of $x$
around $z$, and the fact that $x_s=2p$, we have
$$
  x(S(z');z')-x(S(z);z) =
   2p(S(z);z)(S(z)-S(z'))
   +D_z x(S(z);z)(z'-z) + R(z,z'),
$$
where 
\begin{equation}\label{nsest2}
|R(z,z')|\leq M\left(|S(z)-S(z')|^2 + |z-z'|^2\right)
\leq M(1+S_0^2)|z-z'|^2.
\end{equation}
Moreover,
\begin{align*}
  \frac{d}{ds} p(s;z)^TD_z x(s;z) &= 
  p_s(s;z)^TD_z x(s;z) +
  p(s;z)^TD_z x_s(s;z)\\
  &= 
  -\nabla_x n^2(x(s;z))^TD_z x(s;z) +
  2p(s;z)^TD_z p(s;z) \\
  &= \nabla_z H(x(s;z),p(s;z))
  = \nabla_z H(x(0;z),p(0;z)) = 0,
\end{align*}
by the choice of data at $s=0$.
Therefore, since $p(0;z)$ is orthogonal to $\Sigma$ and $x_{z_j}(0;z)$ are
tangent vectors to $\Sigma$, we have $p(s;z)^TD_z x(s;z)=0$ for all $s$
and
\begin{equation}\label{nsest3}
  |x(S(z);z)-x(S(z');z')|  \geq 
   2|p(S(z);z)||S(z)-S(z')| - |R|\geq 
   2n_0|S(z)-S(z')| - |R|.
\end{equation}
Together \eq{nsest1}, \eq{nsest2} and \eq{nsest3} now give
\begin{align*}
\lefteqn{  |x(S(z);z)-x(S(z');z')|+|p(S(z);z)-p(S(z');z')| }
\hskip 15 mm & \\
&\geq d_1 |z-z'| - \frac{M}{n_0}|x(S(z);z)-x(S(z');z')|
-\frac{M^2(1+S_0^2)}{n_0}|z-z'|^2,
\end{align*}
which implies
$$
|x(S(z);z)-x(S(z');z')|+|p(S(z);z)-p(S(z');z')| 
\geq \tilde{d}_1|z-z'|\left(1 - m |z-z'|\right),
$$
with $m = M^2(1+S_0^2)/(n_0d_1)$ and $\tilde{d}_1=d_1/(1+M/n_0)$.
The lemma is thus proved for $|z-z'|\leq 1/2m$ 
with $c_1=\tilde{d}_1/2$.
On the other hand, if $|z-z'|\geq 1/2m$ there is a
number $c(m)$ such that
$$
  \mathop{\inf_{z,z'\in \Sigma,\ |z-z'|\geq 1/2m}}_{|s|\leq L,\ |s'|\leq L} 
  |p(s; z)-p(s'; z')|+|x(s;z)-x(s';z')| =: c(m)>0,
$$
by the uniqueness of solutions to the Hamiltonian system.
Hence, in particular, for $|z-z'|\geq 1/2m$,
$$
|x(S(z);z)-x(S(z');z')|+|p(S(z);z)-p(S(z');z')|
\geq c(m) \geq \frac{c(m)}{\Lambda} |z-z'|,
$$
where $\Lambda = \sup_{z,z'\in\Sigma} |z-z'|<\infty$ is the
diameter of the bounded set $\Sigma$.
This proves the lemma with $c_1 = \min(\tilde{d}_1/2, c(m)/\Lambda)$.
\end{proof}

We now prepare  some main  estimates  for proving
(\ref{Izz}).

\begin{lem}[Phase estimates]\label{PhaseEst} Let $\eta$ be small and {$x\in D(\eta,z,z')$} where
$$
{D(\eta,z,z') = \Omega(\eta,z) \cap \Omega(\eta,z').}
$$
\begin{itemize}
\item For all $z,z'\in \Sigma$ {and sufficiently small $\eta$}, there exists a constant $\delta$ independent of $k$ such that
$$
\Im \psi\left(x,z,z'\right) \geq\  \delta\left[\left|x- \gamma \right|^2+\left|x-\gamma' \right|^2\right].
$$
\item For $|\gamma(x; z)-\gamma(x; z')|< \theta |z-z'|$,
\begin{align*}
 |\nabla_x\psi(x,z,z')| \geq C(\theta,\eta)|z-z'|\ ,
\end{align*}
where $C(\theta,\eta)$ is independent of $x$
and positive if $\theta$ and $\eta$ are
sufficiently small.
\end{itemize}
\end{lem}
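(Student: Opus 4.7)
The first inequality is essentially immediate. Since $\psi = \phi(x;z') - \overline{\phi(x;z)}$, the identity $\Im(-\bar w) = \Im w$ gives $\Im\psi(x,z,z') = \Im\phi(x;z') + \Im\phi(x;z)$. For $x\in D(\eta,z,z')$, $x$ lies simultaneously in the tubular neighborhoods $\Omega(\eta;z)$ and $\Omega(\eta;z')$, so the pointwise bound \eq{imphiest} applies to each term with a uniform constant $c>0$, and the claim follows with $\delta = c$ after, if necessary, shrinking $\eta$ so that \eq{imphiest} is valid uniformly over all beams considered.

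For the gradient estimate, my plan is to first Taylor-expand $\nabla_x\phi(x;z)$ off the central ray. Differentiating \eq{ph} produces both direct-$x$ and $\partial_x s$ contributions, but the cancellations $\dot S - p\cdot\dot x = 2n^2 - 2|p|^2 = 0$ and $M\dot x = \dot p$ along the bicharacteristic eliminate the leading-order $\partial_x s$ term, yielding $\nabla_x\phi(x;z) = p(s;z) + M(s;z)(x-\gamma) + O(|x-\gamma|^2)$, uniformly in $x$ and $z$. Writing the analogous expansion at $z'$ and using that $p$ is real gives
\[
\nabla_x \psi = [p(s';z') - p(s;z)] + \bigl[M(s';z')(x-\gamma') - \overline{M(s;z)}(x-\gamma)\bigr] + O(\eta^2).
\]
I would then apply Lemma~\ref{nonsqueezing} with $S(z) := s(x;z)$, which is Lipschitz in $z$ uniformly in $x\in D(\eta,z,z')$ by the implicit function theorem applied to the defining relation $\dot x(s;z)\cdot(x-x(s;z))=0$. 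Combined with the hypothesis $|\gamma-\gamma'| < \theta|z-z'|$, this yields $|p(s';z') - p(s;z)| \geq (c_1 - \theta)|z - z'|$, strictly positive once $\theta < c_1$.

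The finishing move is a case split on the size of $|x-\gamma|$ relative to $|z-z'|$. Using Lipschitz dependence of $M$ on $(s,z)$ together with $|\gamma-\gamma'|\leq\theta|z-z'|$, the bracketed $M$-term rearranges as
\[
M(s';z')(x-\gamma') - \overline{M(s;z)}(x-\gamma) = 2i\,\Im M(s;z)(x-\gamma) + E, \qquad |E|\leq C(\eta+\theta)|z-z'|,
\]
so $\Re(\nabla_x\psi)$ inherits the lower bound $(c_1-\theta)|z-z'|$ up to errors, while $\Im(\nabla_x\psi)$ is essentially $2\Im M(s;z)(x-\gamma)$. Choosing a threshold $K=K(\eta)$: when $|x-\gamma| \leq K|z-z'|$, the real part dominates and gives $|\nabla_x\psi| \geq [c_1 - \theta - C(\theta+\eta)(1+K)]|z-z'|$; when $|x-\gamma| > K|z-z'|$, positive definiteness of $\Im M$ on $\dot x^\perp$ (which contains $x-\gamma$, since $\gamma$ is the nearest ray point) gives $|\Im(\nabla_x\psi)| \geq c|x-\gamma| - C(\theta+\eta)|z-z'| - O(\eta^2) \geq [cK - C(\theta+\eta)]|z-z'|$. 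The main obstacle will be balancing $\theta$, $\eta$, and $K$ so that both lower bounds are strictly positive at once: one first fixes $\theta < c_1$, then chooses $\eta$ small and $K$ in a window where $C\eta K$ remains small while $cK$ is large enough to dominate $C(\theta+\eta)$, producing the asserted $C(\theta,\eta)>0$.
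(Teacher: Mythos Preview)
Your argument is correct, but it takes a more elaborate route than the paper's. For the first bullet the two proofs coincide. For the gradient bound, the paper works entirely with the real part $h:=\Re\nabla_x\phi$ and telescopes
\[
h(x;z')-h(x;z)=\bigl[h(\gamma';z')-h(\gamma;z)\bigr]+\bigl[h(\gamma;z')-h(\gamma';z')\bigr]+\bigl(\,[h(x;z')-h(\gamma;z')]-[h(x;z)-h(\gamma;z)]\,\bigr).
\]
The first bracket is $p(s';z')-p(s;z)$ and is bounded below via Lemma~\ref{nonsqueezing} exactly as you do; the second is $O(|\gamma-\gamma'|)\leq C_1\theta|z-z'|$. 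The decisive difference is the last grouping: instead of bounding the two off-ray increments separately, the paper writes their \emph{difference} by the fundamental theorem of calculus as $\int_0^1[\Re D^2\phi(\,\cdot\,;z')-\Re D^2\phi(\,\cdot\,;z)](x-\gamma)\,d\tau$, which is $\leq C|z-z'|\,|x-\gamma|\leq C_2\eta|z-z'|$ by Lipschitz dependence of $D^2\phi$ on $z$. This single step replaces your case split on $|x-\gamma|$ versus $|z-z'|$ and your appeal to positive definiteness of $\Im M$; no auxiliary threshold $K$ is needed, and only $\theta$ and $\eta$ must be chosen small. Your approach does work and has the virtue of showing that the imaginary part alone controls the large-$|x-\gamma|$ regime, but it carries more bookkeeping---in particular, your ``$O(\eta^2)$'' remainder in Case~2 cannot simply be dropped against $[cK-C(\theta+\eta)]|z-z'|$, since $|z-z'|$ may be arbitrarily small; it must instead be absorbed into the leading $c|x-\gamma|$ term via $|x-\gamma|^2\leq\eta|x-\gamma|$ before comparing with $K|z-z'|$.
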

\begin{proof} The first result 
{follows directly from \eq{imphiest}.}
For the second result, 
we proceed to obtain
\begin{align*}
 |\nabla_x\psi(x,z,z')|
  \geq{}& |\Re\nabla_x\psi(x,z,z')| \\
  ={}& |\Re\nabla_x\phi(x; z')-\Re\nabla_x\phi(x; z)|, \qquad \Bigl\{h:= \Re\nabla_x\phi\Bigr\} \\
  ={}& \Bigl|h(\gamma'; z')-h(\gamma; z)+ h(\gamma; z') -h(\gamma';z')  \\
  & + h(x;z')- h( \gamma; z') +h( \gamma, z)-h(x, z)\Big|.
\end{align*}
{For the function $z\mapsto s(x;z)$ we can find a Lipschitz constant
that is uniform in $x$. 
Recalling that $\gamma=x(s(x;z);z)$
and $\gamma'=x(s(x;z');z')$
we can therefore use 
(\ref{NonSqueezeIneq}) in Lemma \ref{nonsqueezing}
for the first pair, and obtain}
\begin{align*}
 | h(\gamma'; z')-h(\gamma; z)|=|p(s(x;z); z') -p(s(x;z');z)|
  \geq c_1|z-z'| - |\gamma-\gamma'|.
\end{align*}
The second pair $|h(\gamma; z') -h(\gamma';z')|$ 
is  bounded by $C_1 |\gamma-\gamma'|$.
Then, by the Fundamental Theorem of Calculus, for
{$x\in D(\eta,z,z')$}, the remaining terms are
\begin{align*}
\left| \int_0^1 \left[ D^2\phi(\tau x+(1-\tau) \gamma;  z') - D^2\phi(\tau x+(1-\tau) \gamma; z) \right] (x- \gamma) d\tau  \right| 
\leq C|z-z'||x- \gamma| \leq C_2\eta |z-z'|.
\end{align*}
 Using these estimates for the case $|\gamma-\gamma'|<  \theta |z-z'|$ we then obtain
\begin{align*}
 |\nabla_x\psi(x,z,z')|
  &\geq c_1|z-z'| - |\gamma-\gamma'| - C_1|\gamma-\gamma'|  - C_2\eta|z-z'| \\
  &\geq c_1|z-z'| - (1+C_1)\theta|z-z'| - C_2\eta |z-z'| \\
  &=: C(\theta,\eta)|z-z'|\ ,
\end{align*}
where $C(\theta,\eta)$ is positive if $\theta$
and $\eta$ are small enough.
\end{proof}

\subsection{Estimate of $I_1$}

We start by looking at $I_1$ which
 corresponds to the non-caustic region of the solution.
We have
\begin{align*}
{\mathcal I}_1 &:=\left| \int_{|x|\leq 5R}\int_{\Sigma}\int_{\Sigma} I_1(x,z, z') dzdz'dx  \right|\\
 &\leq  k^{1+|\beta|}
 {\int_{\Sigma}\int_{\Sigma}\int_{D(\eta,z,z')}}
  \chi_1(x,z,z')
   e^{ik \psi(x,z,z')} g(x; z') \overline{g(x;z)} \\
 & \qquad\qquad \times (x-\gamma)^\beta (x-\gamma')^\beta  \varrho_\eta(x-\gamma) \varrho_\eta(x-\gamma')dxdzdz'.
\end{align*}
We begin estimating
\begin{align*}
 \left|{\mathcal I}_1\right|
 &\leq
 C k^{1+|\beta|}
  {\int_{\Sigma}\int_{\Sigma}\int_{D(\eta,z,z')}}
  \chi_1(x,z,z')
  |x-\gamma|^{|\beta|} |x-\gamma'|^{|\beta|}e^{-\delta k (|x-\gamma||^2+|x-\gamma|'^2)}dxdzdz'.
\end{align*}
 Now, using the estimate \eq{expest}
with $p=|\beta|$, $a=\delta
k$ and $s=|x-\gamma|$ or $|x-\gamma'|$, and
continuing the estimate of $I_1$, we have for a
constant, $C$, independent of $z$ and $z'$,
\begin{align*}
 \left|{\mathcal I}_1\right| &\leq C k^{|\beta|+1} \left(\frac{1}{k\delta}\right)^{|\beta|}
 {\int_{\Sigma}\int_{\Sigma}\int_{D(\eta,z,z')}}
   \chi_1(x,z,z')
 e^{-\frac{\delta k}{2}(|x-\gamma|^2+|x-\gamma'|^2)} \ dxdzdz' \\
   &\leq C k
      {\int_{\Sigma}\int_{\Sigma}\int_{D(\eta,z,z')}}
      \chi_1(x,z,z')
      e^{-\frac{\delta k}{4}(|x-\gamma|^2+|x-\gamma'|^2)} e^{-\frac{\delta k}{8}|\gamma-\gamma'|^2} e^{-\frac{\delta k}{2}  |x-\bar \gamma|^2|} \ dxdzdz'\\
   &\leq C k
      {\int_{\Sigma}\int_{\Sigma}\int_{D(\eta,z,z')}}
      \chi_1(x,z,z')
   e^{-\frac{\delta k}{4}(|x-\gamma|^2+|x-\gamma'|^2)} e^{-\frac{\delta k}{8} |\gamma-\gamma'|^2} \
   dxdzdz'  \\
   &\leq C k
      {\int_{\Sigma}\int_{\Sigma}e^{-\frac{\delta k}{8} \theta|z-z'|^2} \int_{D(\eta,z,z')}}
   e^{-\frac{\delta k}{4}(|x-\gamma|^2+|x-\gamma'|^2)}  \ dx dzdz'.
\end{align*}
Here we have used the identity
$$
|x-\gamma|^2+|x-\gamma'|^2=2|x-\bar \gamma|^2+\frac{1}{2}|\gamma-\gamma'|^2,
$$
and the fact that
$|\gamma-\gamma'|>\frac12\theta|z-z'|$ on the
support of $\chi_1$. For the inner integral we
can use Cauchy--Schwarz, {together with the fact that $D\subset\Omega(\eta;z)$
and $D\subset\Omega(\eta;z')$,}
\begin{align*}
{\int_{D(\eta,z,z')}}
   e^{-\frac{\delta k}{4}(|x-\gamma|^2+|x-\gamma'|^2)}dx &\leq
   \left(
   \int_{{\Omega(\eta;z)}}
   e^{-\frac{\delta k}{2}(|x-\gamma|^2)}dx
   \int_{{\Omega(\eta;z')}}
   e^{-\frac{\delta k}{2}(|x-\gamma'|^2)}dx\right)^{1/2}.
\end{align*}
By a change of local coordinates we can show that
\begin{equation}\label{degat}
\int_{{\Omega(\eta;z)}} e^{-\frac{\delta k}{4}|x-\gamma|^2}  \ dx \leq Ck^{(1-d)/2}.
\end{equation}
From this it follows that
\begin{equation}\label{I1x}
|{\mathcal I}_1|\leq C k^{(3-d)/2}
      \int_{\Sigma}\int_{\Sigma}
   e^{-\frac{\delta k}{8} \theta|z-z'|^2} \ dzdz'.
\end{equation}
To show (\ref{degat})  for each $z$,  we
introduce local coordinates in  the tubular
neighborhood {$\Omega(\eta;z)$ around the ray $\gamma$} in the
following way:  choose (smoothly in $(s,z)$) a
normalized  orthogonal basis $e_1(s,z),\ldots,
e_{d-1}(s,z)$ in the plane $\{x\,:\,
(x-{x(s;z)})\cdot p(s; z)=0\}$ with the origin
at {$x(s;z)$}. Since $s$ and $z$ lie in
compact sets, there will be an $\eta>0$ such that
in the tube {$\Omega(\eta;z)$}
the mapping from $x$ to $(s, y)$ defined by
$$
x={x(s;z)}+y_1 e_1(s,z)+\dots+ y_{d-1}\cdot e_{d-1}(s,z)
$$
will be a diffeomorphism depending smoothly on
$z$, hence
\begin{align*}
\int_{{\Omega(\eta;z)}} e^{-\frac{\delta k}{4}|x-\gamma|^2}  \ dx =
{\int_{|s|\leq L_0}\int_{|y|\leq \eta}} e^{-\frac{\delta k}{4}|y|^2}\left| \frac{\partial x}{\partial(y, s)}\right|dy ds
\leq Ck^{(1-d)/2},
\end{align*}
{where $L_0$ is chosen such that $|x(L_0;z)|\geq 5R$ for all $z\in\Sigma$.}
Letting $\Lambda =\sup_{z,z'\in \Sigma}|z-z'|<
\infty$ be the diameter of $\Sigma$, we continue
to estimate the $(z,z')$-integral  left  in
(\ref{I1x}):
\begin{align*}
|{\mathcal I}_1| &\leq C k^{(3-d)/2}
      \int_{\Sigma}\int_{\Sigma}
   e^{-\frac{\delta k}{8} \theta|z-z'|^2} \ dzdz'\\
 &\leq  C k^{(3-d)/2}\int_0^\Lambda \tau^{d-2}e^{-\frac{k \delta\theta^2}{8}\tau^2} d\tau\\
&\leq C k^{2-d},
\end{align*}
which concludes the estimate of $I_1$.

\subsection{Estimate of $I_2$}

In order to estimate $I_2$ we use a version of
the non-stationary phase lemma (see
\cite{Hormander}).

\begin{lem}[Non-stationary phase lemma]\label{statphase}
Suppose that $u(x;\zeta)\in C_0^\infty(\Omega
\times Z)$, where $\Omega$ and $Z$ are compact
sets and ${\psi(x; \zeta)\in C^\infty(O)}$ for
some open neighborhood $O$ of $\Omega \times Z$.
If $\nabla_x \psi$ never vanishes in $O$, then
for any $K=0,1,\ldots$,
\begin{align*}
   \left|\int_\Omega u(x; \zeta)e^{i k\psi(x; \zeta)}dx \right|
   \leq C_K k^{-K}  \sum_{|\lambda|\leq K}\int_\Omega \frac{|\partial_x^{\lambda}u(x; \zeta)|}{|\nabla_x\psi(x;\zeta)|^{2K-|\lambda|}}
   e^{- k \Im \psi(x; \zeta)}dx\ ,
\end{align*}
where $C_K$ is a constant independent of $\zeta$.
\end{lem}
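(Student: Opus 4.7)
The plan is to perform $K$ successive integrations by parts using a first-order differential operator that fixes the oscillatory factor $e^{ik\psi}$. Since $\nabla_x\psi$ never vanishes on $O$, and interpreting $|\nabla_x\psi|^2:=\sum_j|\partial_{x_j}\psi|^2=\nabla_x\psi\cdot\overline{\nabla_x\psi}>0$ as in Lemma~\ref{PhaseEst}, the vector field $V(x;\zeta):=\overline{\nabla_x\psi}/|\nabla_x\psi|^2$ is smooth on $O$, and the operator $L:=(ik)^{-1}V\cdot\nabla_x$ satisfies $L(e^{ik\psi})=e^{ik\psi}$. Since $u(\cdot;\zeta)$ has compact support in $\Omega$, integration by parts has no boundary terms and yields
\begin{align*}
\int_\Omega u\,e^{ik\psi}\,dx=\int_\Omega({}^tL\,u)\,e^{ik\psi}\,dx,\qquad {}^tL\,v=-\frac{1}{ik}\nabla_x\cdot(Vv).
\end{align*}
Iterating $K$ times gives $\int_\Omega u\,e^{ik\psi}\,dx=\int_\Omega\bigl(({}^tL)^K u\bigr)e^{ik\psi}\,dx$.

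Next I would establish by induction on $K$ that
\begin{align*}
({}^tL)^K u=\frac{1}{(ik)^K}\sum_{|\lambda|\leq K}\frac{a_{K,\lambda}(x;\zeta)}{|\nabla_x\psi|^{2K-|\lambda|}}\,\partial_x^\lambda u,
\end{align*}
where each coefficient $a_{K,\lambda}$ is a polynomial in $\overline{\nabla_x\psi}$ and derivatives of $\psi$ up to some order depending only on $K$. The inductive step uses that a single application of ${}^tL$ produces two kinds of terms: one where $\nabla_x$ lands on $u$, contributing $V\cdot\nabla u$ at denominator scale $|\nabla_x\psi|^{-1}$; and one where $\nabla_x$ lands on $V$, contributing $(\nabla_x\cdot V)u$ at scale $|\nabla_x\psi|^{-2}$, as one sees by differentiating $|\nabla_x\psi|^{-2}\overline{\nabla_x\psi}$. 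Careful bookkeeping confirms that the invariant ``denominator power plus derivative order on $u$ equals $2K$'' is preserved at each stage.

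Finally, by compactness of $\Omega\times Z$ and the smoothness of $\psi$ on the open neighborhood $O$, the coefficients $a_{K,\lambda}(x;\zeta)$ are uniformly bounded on $\Omega\times Z$. Taking absolute values inside the integral and using $|e^{ik\psi}|=e^{-k\Im\psi}$ then produces the claimed estimate, with $C_K$ depending only on finitely many $C^\ell$-norms of $\psi$ on $\overline{\Omega}\times Z$, hence independent of $\zeta$. The main obstacle is the inductive bookkeeping of the denominator powers in the second paragraph; the integration by parts and the final absolute-value step are standard.
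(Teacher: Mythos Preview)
Your argument is correct and is precisely the standard integration-by-parts proof; the paper itself does not supply a proof for this lemma but merely cites H\"ormander, and indeed the paper already uses the very same identity $e^{ik\psi}=\frac{\overline{\nabla_y\psi}}{ik|\nabla_y\psi|^2}\cdot\nabla_y(e^{ik\psi})$ earlier in \S3. One small refinement worth noting: the induction propagates most cleanly if you carry the slightly stronger hypothesis that each coefficient $b$ multiplying $\partial_x^\lambda u$ satisfies $|\partial_x^\mu b|\leq C_\mu\,|\nabla_x\psi|^{-(2K-|\lambda|+|\mu|)}$ for every $\mu$, rather than tracking an exact polynomial structure for $a_{K,\lambda}$; this is what survives the Leibniz rule when ${}^tL$ differentiates previously generated coefficients.
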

We now define
\begin{align*}
\tilde I_2(z,z') &:= \int_{|x|\leq 5R} I_2(x,z,z')dx\\
&= k^{1+|\beta|}
  \int_{{D(\eta,z,z')}}\chi_2(x,z,z') e^{ik \psi(x,z,z')} g(x; z') \overline{g(x;z)} \\
 & \qquad\qquad\qquad\qquad \times (x-\gamma)^\beta (x-\gamma')^\beta  \varrho_\eta(x-\gamma) \varrho_\eta(x-\gamma')dx.
\end{align*}
In this case, non-stationary phase Lemma
\ref{statphase} can be applied to $\tilde I_2$
with $\zeta=(z,z')\in  \Sigma \times \Sigma $ to
give,
\begin{align*}
 \left|\tilde I_2\right|
 &\leq C_K k^{1+|\beta|-K}  \sum_{|\lambda| \leq K} \int_{{D(\eta,z,z')}}\frac{\left|\partial^\lambda_x \left[(x-\gamma)^\beta(x-\gamma')^\beta \chi_2 g' \overline{g}\varrho_\eta\varrho'_\eta\right]\right|}{|\nabla_x\psi(x,z,z')|^{2K-|\lambda|}}e^{-\Im k \psi(x,z,z')} dx \\
 &\leq C_K k^{1+|\beta|-K} \sum_{|\lambda|\leq K}\Bigg(\frac{1}{
 (C(\theta,\eta)|z-z'|)^{2K-|\lambda|}}
\int_{{D(\eta,z,z')}} \left| \partial^\lambda_x\left[(x-\gamma)^\beta(x-\gamma')^\beta \chi_2 g'\overline{g}\varrho_\eta\varrho'_\eta\right]\right
|e^{-\Im k \psi } dx \Bigg) \\
 &\leq C_Kk^{1+|\beta|-K} \sum_{|\lambda|\leq K}\frac{1}{|z-z'|^{2K-|\lambda|}}
\Bigg(\sum_{\substack{\lambda_1+\lambda_2=\lambda\\ \lambda_1\leq2\beta}}\int_{{D(\eta,z,z')}} \left| \partial^{\lambda_1}_y\left[(x-\gamma)^\beta(x-\gamma')^\beta\right]\right|\\
&\qquad\qquad\qquad\qquad\qquad\qquad\qquad\qquad\qquad\qquad\times\left|\partial^{\lambda_2}_x\left[ \chi_2 g'\overline{g}\varrho_\eta\varrho'_\eta\right]\right|e^{-\Im k\psi} dx \Bigg) \ ,
\end{align*}
where $\varrho'_\eta= \varrho_\eta(x-\gamma')$, and we
used the fact that $|\nabla_x\psi(x,z,z')|\geq
C(\theta,\eta)|z-z'|$ on the support of $\chi_2$, {shown in 
Lemma \ref{PhaseEst}.}
The constant $C_K$ is independent of $z$ and
$z'$. By the bound (\ref{g}) and since
$\varrho_\eta$ is uniformly smooth and $x$, $z$,
$z'$ vary in a compact set,
$\left|\partial^{\lambda_2}_x\left[\chi_2
g'\overline{g}\varrho_\eta\varrho'_\eta\right]\right|$
can be bounded by a constant independent of $x$,
$z$ and $z'$. We estimate the other term as
follows,
\begin{align*}
 \left| \partial_x^{\lambda_1} \left[ (x-\gamma)^\beta (x-\gamma')^\beta \right]\right|
  &\leq C  \sum_{\substack{\lambda_{11}+\lambda_{12}=\lambda_1\\ \lambda_{11},\lambda_{12}\leq\beta}}\left| (x-\gamma)^{\beta-\lambda_{11}}(x-\gamma')^{\beta-\lambda_{12}}\right| \\
 &\leq C \sum_{\substack{\lambda_{11}+\lambda_{12}=\lambda_1\\ \lambda_{11},\lambda_{12}\leq\beta}}|x-\gamma|^{|\beta|-|\lambda_{11}|}\ |x-\gamma'|^{|\beta|-|\lambda_{12}|}\ .
\end{align*}
Now, using the same argument as for estimating
$I_1$, we have
\begin{align*}
& \int_{{D(\eta,z,z')}} \left| \partial^{\lambda_1}_y\left[(x-\gamma)^\beta(x-\gamma')^\beta\right]\right|\left|\partial^{\lambda_2}_y\left[ \chi_2 g'\overline{g}\varrho_\eta\varrho'_\eta\right]\right|e^{-\Im k \psi} dx\\
&\quad\qquad\leq C \sum_{\substack{\lambda_{11}+\lambda_{12}=\lambda_1\\ \lambda_{11},\lambda_{12}\leq\beta}}\int_{{D(\eta,z,z')}} |x-\gamma|^{|\beta|-|\lambda_{11}|}\ |x-\gamma'|^{|\beta|-|\lambda_{12}|}e^{-\Im k \psi } dx \\
&\quad\qquad\leq C(\lambda_2) k^{\frac{ - |\beta|+|\lambda_{11}|-|\beta|+|\lambda_{12}|}{2}}
\int_{{D(\eta,z,z')}} e^{-\frac{k\delta}{2}((x-\gamma)^2+(x-\gamma')^2)}dx\\
&\quad\qquad  \leq  C  k^{(1-d)/2 -|\beta|+|\lambda_1|/2} \ ,
\end{align*}
and consequently,
\begin{align*}
\left|\tilde I_2\right|
& \leq C_K k^{1+|\beta|-K} \sum_{|\lambda|\leq K}\frac{1}{|z-z'|^{2K-|\lambda|}}\sum_{\substack{\lambda_1+\lambda_2=\lambda\\ \lambda_1\leq2\beta}}C(\lambda_2) k^{(1-d)/2 - |\beta|+|\lambda_1|/2} \\
& \leq C_K k^{(3-d)/2}\sum_{|\lambda|\leq K}\frac{1}{(|z-z'|\sqrt{k})^{2K-|\lambda|}} \ .
\end{align*}
On the support of $\chi_2$ the difference
$|z-z'|$ can be arbitrary small, in which case
this estimate is not useful. However, it is easy
to check that the estimate is true also for
$K=0$, and $\tilde{I}_2$ is thus bounded by the
minimum of the $K=0$ and $K>0$ estimates.
Therefore,
\begin{align*}
\left|\tilde I_2\right|
&\leq C k^{(3-d)/2}\min\left[1,\sum_{|\lambda|\leq K}\frac{1}{\left(|z-z'|\sqrt{k}\right)^{2K-|\lambda|}}\right] \\
 &\leq C k^{(3-d)/2}\sum_{|\lambda|\leq K}\min\left[1,\frac{1}{\left(|z-z'|\sqrt{k}\right)^{2K-|\lambda|}}\right] \\
 &\leq C k^{(3-d)/2}\sum_{|\lambda|\leq K}\frac{1}{1+\left(|z-z'|\sqrt{k}\right)^{2K-|\lambda|}} \leq C \frac{k^{(3-d)/2}}{1+\left(|z-z'|\sqrt{k}\right)^{K}} \ .
\end{align*}
Finally, letting $\Lambda =\sup_{z,z'\in
\Sigma}|z-z'|< \infty$ be the diameter of
$\Sigma$, we compute
\begin{align*}
\int_{\Sigma\times \Sigma }\left|\tilde I_{2}(z,z')\right| dzdz'
&\leq  C k^{\frac{3-d}{2}}\int_{\Sigma \times \Sigma} \frac{1}{1 + \left(|z-z'|\sqrt{k}\right)^K} dzdz'\\
&\leq  C k^{\frac{3-d}{2}}\int_0^\Lambda \frac{1}{1 + (\tau \sqrt{k})^K}\tau^{d-2} d\tau \\
& \leq C k^{2-d} \int_0^\infty \frac{\xi^{d-2}}{1+\xi^K}d\xi \\
&\leq  C k^{2-d} \ ,
\end{align*}
if we take $K>d-1$. This shows the $I_2$
estimate,  which proves claim (\ref{Izz}).

\section{Another Superposition}

Specializing to $\rho(x)=(x-y)\cdot \nu$, one can
also take the superposition with respect to
$\nu$. We will carry this out for $d=3$. Starting
with an inversion formula for the Radon
transform:

$$f(x)=-{1\over 8\pi^2}\Delta \left(\int_{S^2}d\nu\left(\int_{(x-y)\cdot \nu=0}f(y)dA_y\right)\right),$$
and noting that $\int_{S^2}d\nu\int_{(x-y)\cdot
\nu=0}f(y)dA_y$ tends to zero as $|x|\to\infty$
when $f\in C_c(\Bbb R^3)$, it follows that
$$\int_{S^2}d\nu\left(\int_{(x-y)\cdot \nu=0}f(y)dA_y\right)=2\pi \int_{\Bbb R^3}{f(y)\over |x-y|}dy.$$
In other words
$$\int_{S^2}\delta(x\cdot\nu)d\nu={2\pi\over
|x|}$$ as a distribution. Hence, ignoring $\rho$ and the lower order term
$$\int_{S^2}g(\nu,y,k)d\nu={2\pi i\over
k}{e^{-k|x-y|^2}\over |x-y|}=_{def}h(x;y,k),$$ and
$\int_{S^2}u_{GB}(x;\nu,y)d\nu$ is a
approximation to the outgoing solution to
$L_nu=h$ satisfying the estimate \eqref{E}.

\section*{Acknowledgments}
This article arose from work at the SQuaRE
project ``Gaussian beam superposition methods for
high frequency wave propagation" supported by the
American Institute of Mathematics (AIM), the
authors acknowledge the support of AIM and the
NSF.

\appendix

\section{Form of the Green's Function}

Let $G_\lambda(x)$ be the free space Green's function for the Helmholtz equation
at {complex valued}
wave number $\lambda=|\lambda|\beta$ where $\beta$ is complex number with
$|\beta|=1$ and $\Im \beta\geq 0$. The Green's function has
the following properties,
\begin{equation}\label{twop}
G_\lambda(x)=O(e^{-\Im k|x|} |x|^{\frac{1-d}{2}}), \quad \partial_r G_\lambda(x) -i\lambda G_\lambda(x)= {O(|x|^{\frac{1-d}{2}})}, \quad r=|x|\to \infty.
\end{equation}
The dependence on {$|k|$} can be scaled out and by rotational invariance
we can write $G_\lambda(x) = |\lambda|^{d-2}\bar{G}_\beta(|\lambda x|)$ where $G_\beta(x)=\bar{G}_\beta(|x|)$.
Then, if
$$
   \bar{G}_\beta(r) = \frac{e^{i\beta r}}{(\beta r)^{\frac{d-1}{2}}} \bar{w}_\beta(r),
$$
the complex valued 
function $\bar{w}_\beta$ will satsify the following ODE for $r>0$,
\begin{equation}\label{ode}
\bar{w}_\beta''(r)+2i\beta\bar{w}_\beta'(r)
- \frac{c_d}{r^{2}}\bar{w}_\beta(r)=0,\qquad
c_d= \left(\frac{d-2}{2}\right)^2-\frac14,
\end{equation}
This follows from applying the Helmholtz operator in $d$ dimensions
to ${G}_\beta$ away from $x=0$ (with $r=|x|$),
\begin{align*}
0&=
\Delta G_\beta(x)+ \beta^2G_\beta(x)
=
 \frac{d^2}{dr^2}\bar{G}_\beta(r)
  + \frac{d-1}{r}\frac{d}{dr}\bar{G}_\beta(r)+\beta^2\bar{G}_\beta(r)\\
  &=\frac{e^{i\beta r}}{(\beta r)^{\frac{d-1}{2}}}\left(
\bar{w}_\beta''(r)+2i\beta\bar{w}_\beta'(r)
- \frac{(d-1)(d-3)}{4}\frac{\bar{w}_\beta(r)}{r^{2}}
\right).
\end{align*}
After differentiating (\ref{ode}) $p$ times we get
\begin{equation}\label{dode}
   \bar{w}_\beta^{(p+2)}(r) + 2i\beta\bar{w}_\beta^{(p+1)}(r) +\sum_{j=0}^p d_{p,j} \bar{w}_\beta^{(j)}(r) r^{-2-p+j}=0,
\end{equation}
for some coefficients $d_{p,j}$. 
From the left property in (\ref{twop}) it follows that {$|\bar{w}_\beta(r)|\leq B_0$
for some bound $B_0$ and $r>1$.}
Moreover, the right property 
(the radiation condition) implies that 
$\bar{w}_\beta'\to (d-1)\bar{w}_\beta/2r$ as $r\to\infty$.
It then follows by induction on (\ref{dode}) that
$\bar{w}_\beta^{(p)}(r)\to 0$ for all $p\geq 1$.

We now claim that there are bounds $B_p$, independent of $r$, such that
$|r^p\bar{w}_\beta^{(p)}(r)|\leq B_p$ for $r>1$. We just saw that
this is true for $p=0$ and we make the induction hypothesis 
that it is true for $j=0,\ldots, p$. Then from (\ref{dode}), 
\begin{align*}
  \left| \frac{d}{dr} e^{2i\beta r}\bar{w}_\beta^{(p+1)}(r)\right|
  &=e^{-2r\Im\beta}\left|\bar{w}_\beta^{(p+2)}(r)+2i\beta\bar{w}_\beta^{(p+1)(r)}\right|\leq 
  e^{-2r\Im\beta}\sum_{j=0}^p|d_{p,j}||\bar{w}_\beta^{(j)}(r)| r^{-2-p+j}\\  
  &\leq B'_{p+1} e^{-2r\Im\beta} r^{-2-p},
\end{align*}
when $r>1$,
where $B'_{p+1} = \sum_{j=0}^p|d_{p,j}B_j|$. Since $\bar{w}_\beta^{(p+1)}(r)\to 0$ 
as $r\to\infty$ and $\Im \beta\geq 0$,
\begin{align*}
  \left|\bar{w}_\beta^{(p+1)}(r)\right| &= e^{2r\Im\beta}\left|\int_r^\infty \frac{d}{ds} e^{2i\beta s}\bar{w}^{(p+1)}(s) ds\right|
  \leq B'_{p+1}\int_r^\infty \frac{e^{2(r-s)\Im\beta}}{s^{p+2}}ds
  \leq \int_r^\infty \frac{B'_{p+1}}{s^{p+2}}ds
  = \frac{B_{p+1}}{r^{p+1}},
\end{align*}
where $B_{p+1}  = B_{p+1}'/(p+1)$.
This shows the claim. 

We conclude that
$$
  G_\lambda(x) = |\lambda|^{d-2}\bar{G}_\beta(|\lambda x|) = 
  \frac{e^{i\lambda|x|}}{|x|^{\frac{d-1}2}}w(x;\lambda),\qquad w(x;\lambda) = |\lambda|^{\frac{d-3}2}\beta^{\frac{1-d}2}\bar{w}_\beta(|\lambda x|),
$$
and for any multi-index $\alpha$,
\begin{align*}
  |\partial_x^\alpha w(x;{\lambda})| &\leq
   C|{\lambda}|^{\frac{d-3}2} \sum_{j=0}^{|\alpha|}
  \left|\frac{d^j}{dr^j} \bar{w}_\beta(|{\lambda}|r)\right|_{r=|x|}
=   |{\lambda}|^{\frac{d-3}2} \sum_{j=0}^{|\alpha|}
  \left|{\lambda}^j\bar{w}_\beta^{(j)}({\lambda}|x|)\right|
=   |{\lambda}|^{\frac{d-3}2} \sum_{j=0}^{|\alpha|}
  B_j|x|^{-j}\\
  &\leq C(\delta)|{\lambda}|^{\frac{d-3}2},
\end{align*}
when $|x|>\delta$ and $|{\lambda}|>1/\delta$.

\end{document}